\title[Generic multiplication]{A generic multiplication in quantised Schur algebras}
\author{Xiuping Su}
\newtheorem{theorem}{Theorem}[section]
\newtheorem{lemma}[theorem]{Lemma}
\newtheorem{proposition}[theorem]{Proposition}
\newtheorem{definition}[theorem]{Definition}
\newtheorem{corollary}[theorem]{Corollary}
\newtheorem{remark}[theorem]{Remark}
\newcommand{\rep}{\mathrm{Rep}}
\renewcommand{\hom}{\mathrm{Hom}}
\newcommand{\gl}{\mathrm{GL}}
\newcommand{\dv}[1]{{\bf #1}}
\newcommand{\field}{{k}}
\renewcommand{\ker}{\mathrm{Ker}}
\newcommand{\im}{\mathrm{Im}}
\newcommand{\lra}{\longrightarrow}
\newcommand{\ra}{\rightarrow}
\newcommand{\sdp}{\times\kern-.2em\vrule height1.1ex depth-.05ex}
\newcommand{\epi}{\lra \kern-.8em\ra}
\begin{document}

\begin{abstract}
We define a generic multiplication in quantised Schur algebras and thus obtain a new algebra structure
in the Schur algebras. We prove that via a modified version of the map from quantum groups
to quantised Schur algebras, defined in \cite{BLM}, a subalgebra of this new algebra is a 
quotient of the monoid algebra
in Hall algebras studied in \cite{Reineke}.  We also prove that the subalgebra of the new algebra gives a
geometric realisation of a positive part of $0$-Schur algebras, defined in \cite{Donkin}. Consequently,
we obtain a multiplicative basis for the positive part of $0$-Schur algebras.
\end{abstract}

\maketitle

\section*{Introduction}

Schur algebras $S(n, r)$ were invented by I. Schur to classify the polynomial representations of
the complex general linear group $\mathrm{Gl}_n(\mathbb{C})$. Quantised Schur algebras $S_q(n, r)$ are
quantum analogues of Schur algebras. Both quantised Schur algebras $S_q(n, r)$
and classical Schur algebras $S(n, r)$
have applications to the representation
theory of $ \mathrm{Gl}_n$ over fields of undescribing characteristics.

In \cite{BLM} A. A. Beilinson, G. Lusztig and R. MacPherson gave a geometric construction of quantised
enveloping algebras of type $\mathbb{A}$. Among other important results they defined surjective algebra
homomorphisms $\theta$, from the integral form of the quantised enveloping algebras to certain
finite dimensional associative algebras.
 They first
defined a multiplication of pairs of $n$-step partial flags in a vector space $k^r$ over a finite field $k$ and
thus obtained a finite dimensional associative algebra.  They also
studied how the structure constants behave when $r$ increases by a multiple of $n$. Then,
by taking a certain limit they obtained the quantised enveloping algebras of type $\mathbb{A}$.
J. Du remarked in \cite{Duj} that the finite dimensional associative algebras studied in \cite{BLM} are canonicallly
isomorphic to the quantised Schur algebras studied by R. Dipper and G. James in \cite{DJ}.

The aim of this paper is to study a generic version of the multiplication of pairs of partial flags defined
in \cite{BLM}. By this generic multiplication we get an algebra structure in the quantised Schur algebras. We prove that
a certain subalgebra of this new algebra is a quotient of the monoid algebra in Hall algebras studied by M. Reineke
in \cite{Reineke}. Via
a modified version of the surjective algebra homomorphism $\theta$, defined in \cite{BLM}, we prove that the
subalgebra is isomorphic to a positive part of $0$-Schur algebras, studied by S. Donkin in \cite{Donkin}.
Thus we achieve a geometric construction of the positive parts of the $0$-Schur algebras.

This paper is organized as follows. In Section 1 we recall definitions and results  in \cite{BLM}
on the multiplication of pairs of partial flags (see also
\cite{Duj, GreenR}). In Section 2 we recall definitions and results on the monoid
given by generic extensions studied in \cite{Reineke}. In Section 3 we study a generic version
of the multiplication of pairs of partial flags  in \cite{BLM}, and prove that this generic multiplication
gives us a new algebra structure in quantised Schur algebras. In Section 4 we prove results on connection between
our new algebras and the monoid algebras given by generic extensions in \cite{Reineke} and to $0$-Schur algebras.
We also provide a multiplicative basis for a positive part of $0$-Schur algebras. As a remark, we would like to
mention that this multiplicative basis is related to Lusztig's canonical basis.

\section{$q$-Schur algebras as quotients of quantised enveloping algebras}

In this section we recall some definitions and results from \cite{BLM} on $q$-Schur algebras as quotients
of quantised enveloping algebras (see also \cite{Duj, GreenR}).

\subsection{q-Schur algebras}
Denote by $\Theta_r$ the set of $n\times n$ matrices whose entries are non-negative integers and sum to $r$. Let
$V$ be an $r$-dimensional vector space over a field $k$. Let $\mathcal{F}$ be the set of all $n$-steps
flags in $V$:
$$V_1\subseteq V_2\subseteq \cdots\subseteq V_n=V. $$
The group $\gl(V)$ acts naturally by change of basis on $\mathcal{F}$.  We let $\gl(V)$ act
diagonally on $\mathcal{F}\times\mathcal{F}$. Let
$(f, f')\in \mathcal{F}\times\mathcal{F}$, we write $$f= V_1\subseteq V_2\subseteq \cdots\subseteq V_n=V \mbox{ and }
f'= V'_1\subseteq V'_2\subseteq \cdots\subseteq V'_n=V.$$
Let $V_0=V_0'=0$ and define
$$a_{ij}=\dim({V_{i-1}+V_i\cap V'_{j}})-\dim({V_{i-1}+V_i\cap V'_{j-1}}). $$
Then the map  $(f, f')\mapsto (a_{ij})_{ij}$ induces a bijection between the set of $\gl(V)$-orbits in
$\mathcal{F}\times\mathcal{F}$ and the
set $\Theta_r$. We denote by $\mathcal{O}_A$ the $\gl(V)$-orbit in $\mathcal{F}\times \mathcal{F}$
corresponding to the matrix $A\in \Theta_r$.

Now suppose that $\field$ is a finite field with $q$ elements. Let $A, \;A', \;A''\in \Theta_r$ and let
$(f_1, f_2)\in \mathcal{O}_{A''}$. Following Proposition 1.1 in \cite{BLM},
 there exists a polynomial $g_{A, A', A''}= c_0+c_1q\cdots+c_mq^m$, given by
$$g_{A, A', A''}= |\{ f\in \mathcal{F}| (f_1, f)\in \mathcal{O}_A, (f, f_2)\in \mathcal{O}_{A'} \}|,$$

\noindent where $c_i$ are integers that do not depend on $q$, the cardinality of the field $k$,
and $(f_1, f_2)\in \mathcal{O}_{A''}$.

Now recall that the $q$-Schur algebra $S_q(n, r)$ is the free $\mathbb{Z}[q, q^{-1}]$-module with basis
$\{e_A| A\in \Theta_r\}$, and with an associative multiplication given by

$$ e_A e_{A'} =\sum_{A''\in \Theta_r}g_{A, A', A''}e_{A''}.
$$

For a matrix $A\in  \Theta_r$, denote by $\mathrm{ro}(A)$ the vector $(\sum_ja_{1j}, \sum_ja_{2j},\cdots,
\sum_ja_{nj} )$ and by $\mathrm{co}(A)$ the vector $(\sum_ja_{j1}, \sum_ja_{j2},\cdots,
\sum_ja_{jn} )$. By the definition of the multiplication it is easy to see that
$$e_Ae_{A'}=0 \mbox{ if } \mathrm{co}(A)\not=\mathrm{ro}(A').$$

Denote by $E_{ij}$ the elementary $n\times n$ matrix with $1$ at the entry $(i, j)$ and $0$ elsewhere.
We recall a lemma, which we will use later,   on the multiplication defined above.

\begin{lemma}[\cite{BLM}] \label{Lemma3.2}
Assume that $1\leq h<n$. Let $A=(a_{ij})\in \Theta_r$.
Assume that $B=(b_{ij})\in \Theta_r$ such that $B-E_{h, h+1}$ is a diagonal matrix and
$\mathrm{co}(B)=\mathrm{ro}(A)$. Then
$$
e_Be_A=\sum_{p: a_{h+1, p}>0}v^{2\sum_{j>p}a_{hj}}\frac{v^{2(a_{hp}+1)}-1}{v^2-1}e_{A+E_{h,p}-E_{h+1, p}}.
$$
\end{lemma}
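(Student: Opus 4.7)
The plan is to compute $e_B e_A$ directly from the definition: fix $A''\in\Theta_r$ and a pair $(f_1,f_2)\in\mathcal{O}_{A''}$, then count the $n$-step flags $f$ with $(f_1,f)\in\mathcal{O}_B$ and $(f,f_2)\in\mathcal{O}_A$.

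First I would unpack what the constraint $(f_1,f)\in\mathcal{O}_B$ says when $B-E_{h,h+1}$ is diagonal. Writing $f_1 = U_1\subseteq\cdots\subseteq U_n$ and $f = W_1\subseteq\cdots\subseteq W_n$, the row- and column-sums of $B$ show that $\dim U_i=\dim W_i$ for $i\ne h$ while $\dim U_h = \dim W_h+1$; combined with the vanishing of all other off-diagonal entries of $B$, this forces $W_i=U_i$ for $i\ne h$ and $U_{h-1}\subseteq W_h\subsetneq U_h$ of codimension one. Hence $f$ is determined by choosing a hyperplane $\bar W_h$ of $U_h/U_{h-1}$, and the multiplicity $g_{B,A,A''}$ is just the number of such hyperplanes whose associated $A$ matches the given one.

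Next I would relate $A$ to $A''$. Consider the filtration $F_j := (U_h\cap V'_j + U_{h-1})/U_{h-1}$ of $U_h/U_{h-1}$, whose jumps $\dim F_j-\dim F_{j-1}$ are exactly the entries $a''_{hj}$. A hyperplane $\bar W_h$ has a unique index $p$ with $F_{p-1}\subseteq\bar W_h$ and $F_p\not\subseteq\bar W_h$. Computing $a_{hj}=\dim(F_j\cap\bar W_h)-\dim(F_{j-1}\cap\bar W_h)$ shows $a_{hj}=a''_{hj}$ for $j\ne p$ and $a_{hp}=a''_{hp}-1$; column-sum conservation in $\Theta_r$ then forces $a_{h+1,p}=a''_{h+1,p}+1$ (and all other entries unchanged). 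Equivalently $A''=A+E_{h,p}-E_{h+1,p}$, so summation over $A''$ translates to summation over the admissible indices $p$; the matrix $A''$ lies in $\Theta_r$ exactly when $a''_{h+1,p}\ge0$, i.e.\ when $a_{h+1,p}>0$, which explains the range of summation.

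Finally I would count. For a fixed $p$, the number of hyperplanes of $U_h/U_{h-1}$ that contain $F_{p-1}$ but not $F_p$ is
\[
\frac{q^{d-\dim F_{p-1}}-1}{q-1}-\frac{q^{d-\dim F_p}-1}{q-1}=q^{\sum_{j>p}a''_{hj}}\cdot\frac{q^{a''_{hp}}-1}{q-1},
\]
where $d=\dim U_h/U_{h-1}=\sum_j a''_{hj}$. Substituting $a''_{hj}=a_{hj}$ for $j\ne p$, $a''_{hp}=a_{hp}+1$, and $q=v^2$ yields the coefficient $v^{2\sum_{j>p}a_{hj}}(v^{2(a_{hp}+1)}-1)/(v^2-1)$ claimed in the lemma. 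Since these integer counts are independent of $q$, the identity is valid over $\mathbb{Z}[v,v^{-1}]$.

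The only real obstacle is the structural Step~1 (pinning down that $W_h$ is a hyperplane in $U_h$ containing $U_{h-1}$); once the flag $f$ is parametrised by a single hyperplane in $U_h/U_{h-1}$, Steps~2--3 are bookkeeping and Step~4 is a standard $q$-count.
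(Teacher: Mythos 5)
Your proof is correct. The paper itself gives no proof of this lemma (it is quoted from [BLM]), and your argument — identifying $\mathcal{O}_B$ with the choice of a hyperplane $\bar W_h$ in $U_h/U_{h-1}$ containing $U_{h-1}$, tracking how the jump index $p$ of $\bar W_h$ relative to the filtration $F_j$ changes rows $h$ and $h+1$ of the matrix, and then performing the standard $q$-count of hyperplanes containing $F_{p-1}$ but not $F_p$ — is precisely the direct counting proof given in the original source.
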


\subsection{The map $\theta: U_\mathcal{A}(\mathrm{gl}_n)\rightarrow S_v(n, r)$}
Let $\mathcal{A}=\mathbb{Z}[v, v^{-1}]$ and $v^2=q$. Let $U_\mathcal{A}(\mathrm{gl}_n)$ be the integral
form of the quantised enveloping algebra of the Lie algebra $\mathrm{gl}_n$.
Denote by $S_v(n, r)$
the algebra  $\mathcal{A}\otimes S_q(n, r)$.
Let $$\theta:  U_\mathcal{A}(\mathrm{gl}_n)\rightarrow S_v(n, r)$$
be the surjective algebra homomorphism defined by A. A. Beilinson, G. Lusztig and R. MacPherson in \cite{BLM}.
Through the map $\theta$ we can view the Schur algebra $S_v(n, r)$ as a quotient of the
quantised enveloping algebra $U_\mathcal{A}(\mathrm{gl}_n)$.
We are interested in the restriction of $\theta$ to the positive part $U^+$
of  $U_\mathcal{A}(\mathrm{gl}_n)$.

Unless stated otherwise, we
let $Q$ be the linearly oriented quiver of type $\mathbb{A}_{n-1}$:
$$\xymatrix{Q: 1\ar[r]&2\ar[r]&\cdots\ar[r]&n-1}.$$
By a well-known result of C. M. Ringel
(see \cite{RingelBanach, RingelInv}), the algebra $U^+$ is
isomorphic to the twisted Ringel-Hall algebra $H_q(Q)$, which is generated
by isomorphism classes of simple $kQ$-modules via Hall multiplication.

Denote by $S_i$ the simple module of the path algebra $kQ$ associated to vertex $i$ of $Q$.
By abuse of notation we also denote by $M$ the isomorphism class of a $kQ$-module
$M$. For any $s\in \mathbb{N}$, denote by $D_s$ the set of diagonal matrices satisfying
that the entries are non-negative integers and that the sum of the entries is $s$.
For a matrix $A\in \Theta_r$, denote by $[A]= v^{-\dim\mathcal{O}_A+\dim pr_1(\mathcal{O}_A )}e_A$,
where $pr_1$ is the natural projection to the first component of
$\mathcal{F}\times \mathcal{F}$.
Now the map $\theta$ can be defined on the twisted Ringel-Hall algebra as follows:
$$ \theta: H_q(Q)\rightarrow S_v(n, r), \;\; S_i\mapsto \sum_{D\in D_{r-1}}[E_{i, i+1}+D].
$$

\section{A monoid given by generic extensions} \label{monoidreineke}

In this section we briefly recall definitions and results on
the monoid of generic extensions in \cite{Reineke}, and we
let $k$ be an algebraically closed field.
Results in this
section work for any Dynkin quiver $Q=(Q_0, \; Q_1)$, where $Q_0=\{1, \cdots, n\}$ is the set
of vertices of $Q$ and $Q_1$ is the set of arrows of $Q$.
We denote by $\mathrm{mod} kQ$ and $\rep(Q)$,
respectively, the  category of finitely generated left $kQ$-modules and the category of finite dimensional
representations of $Q$. We don't distinguish a representation of $Q$ from the corresponding $kQ$-module.

Let $\dv b\in \mathbb{N}^{n-1}$. Denote by
$$\rep(\dv b)= \Pi_{i\rightarrow j\in Q_1}\hom_k(k^{b_i}, k^{b_j})$$
the representation variety of $Q$,
which is an affine space
consisting of representations with dimension vector $\dv b$. The group $\gl(\dv b)=\Pi_i\gl(b_i)$
acts on $\rep(\dv b)$ by conjugation and there is a one-to-one correspondence between
$\gl(\dv b)$-orbits in $\rep(\dv b)$ and isomorphism classes of representations in $\rep(\dv b)$.
Denote by $\mathcal{E}(M, N)$ the
subset of $\rep(\dv b)$, containing points which are extensions of $M$ by $N$.

\begin{lemma}[\cite{Reineke}]
 The set $\mathcal{E}(M, N)$ is an irreducible subset of $\rep(\dv b)$.
\end{lemma}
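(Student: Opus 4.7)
The strategy is to realise $\mathcal{E}(M,N)$ as the image of an irreducible variety under a morphism; irreducibility then follows from the standard fact that the continuous image of an irreducible set is irreducible.

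Let $\dv n$ and $\dv m$ be the dimension vectors of $N$ and $M$, so $\dv b=\dv n+\dv m$. Fix decompositions $k^{b_i}=k^{n_i}\oplus k^{m_i}$ at every vertex $i$, together with representatives $(N_{ij})$ and $(M_{ij})$ of the isomorphism classes of $N\in\rep(\dv n)$ and $M\in\rep(\dv m)$. Introduce the affine space
$$W=\prod_{i\rightarrow j\in Q_1}\hom_k(k^{m_i},k^{n_j}).$$
For each $f=(f_{ij})\in W$, let $X_f\in\rep(\dv b)$ be the representation that sends the arrow $i\rightarrow j$ to the block matrix $\bigl(\begin{smallmatrix}N_{ij}&f_{ij}\\0&M_{ij}\end{smallmatrix}\bigr)$. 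By construction $k^{\dv n}\subseteq k^{\dv b}$ is a subrepresentation of $X_f$ isomorphic to $N$ with quotient isomorphic to $M$, so $X_f\in\mathcal{E}(M,N)$.

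Next I would analyse the morphism of varieties
$$\Phi\colon \gl(\dv b)\times W\longrightarrow \rep(\dv b),\qquad (g,f)\mapsto g\cdot X_f,$$
where $\cdot$ denotes conjugation. Its image lies in $\mathcal{E}(M,N)$, since the property of being an extension of $M$ by $N$ is $\gl(\dv b)$-invariant. For the opposite inclusion, take $Y\in\mathcal{E}(M,N)$ and pick $N'\subseteq Y$ with $N'\cong N$ and $Y/N'\cong M$; selecting ordered bases of each $k^{b_i}$ that first span $N'_i$ and then extend to a complement identified with $k^{m_i}$ via some isomorphism $Y/N'\cong M$ yields $g\in\gl(\dv b)$ and $f\in W$ with $g\cdot X_f=Y$. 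Hence $\mathcal{E}(M,N)=\im\Phi$. Now $\gl(\dv b)$ is irreducible as a product of general linear groups, $W$ is an affine space and so irreducible, their product is therefore irreducible, and consequently so is $\im\Phi=\mathcal{E}(M,N)$.

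The only point needing care is the inclusion $\mathcal{E}(M,N)\subseteq\im\Phi$: one must exhibit, for each $Y$ in $\mathcal{E}(M,N)$, bases of the $k^{b_i}$ that simultaneously split the filtration $N'_i\subseteq k^{b_i}$ as vector spaces and match the fixed models of $N$ and $M$. Splitting is automatic over a field and the matching uses only the chosen isomorphisms $N'\cong N$, $Y/N'\cong M$, so this step is routine bookkeeping rather than a genuine obstacle; the content of the argument really is the irreducibility observation applied to $\Phi$.
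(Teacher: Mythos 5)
Your argument is correct and is essentially the proof given in Reineke's paper, which this paper cites without reproducing: one realises $\mathcal{E}(M,N)$ as the image of the irreducible variety $\gl(\dv b)\times W$ under the morphism $(g,f)\mapsto g\cdot X_f$, where $W$ parametrises the block upper-triangular extensions of the fixed representatives. The bookkeeping you flag (choosing bases adapted to $N'\subseteq Y$ and to fixed isomorphisms with the models of $N$ and $M$) is indeed routine, so there is nothing to add.
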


Thus there exists a unique open $\gl(\dv b)$-orbit in $ \mathcal{E}(M, N)$. We say a point
in $\mathcal{E}(M, N)$ is generic if it is contained in the open orbit. Generic points in
$\mathcal{E}(M, N)$ are also called generic extensions of $M$ by $N$.

\begin{definition}[\cite{Reineke}]
Let $M$ and $N$ be two isomorphism classes in $\mathrm{mod} kQ$. Define a multiplication
$$M\ast N=G,$$
where $G$ is the isomorphism class of the generic points in $\mathcal{E}(M, N)$.
\end{definition}

Denote by ${\bf H}_q(Q)$ the Ringel-Hall algebra  over $\mathbb{Q}[q]$ and by
${\bf H}_0(Q)$ the specialisation of ${\bf H}_q(Q)$ at $q=0$.

\begin{theorem}[\cite{Reineke}]\label{hallmonoid}
(1) $\mathcal{M}=(\{M|M$ is an isomorphism class in $ \mod kQ\}, \; \ast)$ is a monoid.

(2) $\mathbb{Q}\mathcal{M}\cong {\bf H}_0(Q)$ as algebras.
\end{theorem}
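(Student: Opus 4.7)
The plan is to treat part (1) as a geometric statement about extension varieties and to deduce (2) by matching structure constants under the specialisation $q=0$.

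For part (1), the zero module is a two-sided identity since $\mathcal{E}(M,0)$ and $\mathcal{E}(0,M)$ each equal the single $\gl$-orbit of $M$. For associativity I would introduce, for any $M,N,P\in\mathrm{mod}\,kQ$, the variety $\mathcal{F}(M,N,P)$ of filtered representations $(X;\,0\subseteq X_1\subseteq X_2\subseteq X)$ with $X_1\cong P$, $X_2/X_1\cong N$ and $X/X_2\cong M$. The two natural projections from $\mathcal{F}(M,N,P)$, one forgetting $X_1$ and one forgetting $X_2$, have fibres that are themselves extension varieties, so iterated application of Lemma~2.1 shows $\mathcal{F}(M,N,P)$ is irreducible. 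Now project to $X$: the open subset on which $X_2\cong N\ast P$ maps into $\mathcal{E}(M,N\ast P)$ and realises $M\ast(N\ast P)$ generically, while the open subset on which $X/X_1\cong M\ast N$ maps into $\mathcal{E}(M\ast N,P)$ and realises $(M\ast N)\ast P$ generically. Since two non-empty opens in an irreducible variety intersect, the two iterated generic extensions must coincide.

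For (2), the map $\Phi:\mathbb{Q}\mathcal{M}\to{\bf H}_0(Q)$ sending $[M]\mapsto u_M$ is a bijection of bases, so it suffices to verify that the Hall polynomial satisfies $F^G_{M,N}(0)=\delta_{G,M\ast N}$. Over a finite field of $q$ elements, $F^G_{M,N}(q)$ counts submodules $L\subseteq G$ with $L\cong N$ and $G/L\cong M$. I would stratify this count by the $\gl$-orbits inside $\mathcal{E}(M,N)$, express each stratum's contribution through Riedtmann's formula in terms of orbit dimensions and automorphism group orders, and exploit that $M\ast N$ is characterised as the unique class in $\mathcal{E}(M,N)$ whose orbit is open. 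A codimension count should then show that the constant term of $F^G_{M,N}(q)$ equals $1$ precisely when $G\cong M\ast N$ and vanishes otherwise.

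The main obstacle is this last constant-term identification: one must simultaneously pin down that the generic stratum contributes exactly $1+q\cdot(\text{higher order})$ and that each proper stratum contributes a polynomial with zero constant term. Once that lemma is in place, the associativity from (1) combined with the compatibility $\Phi(M\ast N)=\Phi(M)\Phi(N)$ promotes $\Phi$ from a linear bijection to an algebra isomorphism, completing the theorem.
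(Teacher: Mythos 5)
First, a caveat: the paper does not prove this statement at all — it is imported verbatim from Reineke's article — so there is no internal proof to compare against, and I am measuring your attempt against the argument it cites. Your part (1) is essentially that argument: Reineke proves associativity precisely by introducing the variety of filtered representations with prescribed subquotients and playing its two forgetful projections against each other. Two steps need tightening. The irreducibility of $\mathcal{F}(M,N,P)$ does not follow by ``iterated application'' of the irreducibility of $\mathcal{E}(-,-)$ alone: the fibres of your forgetful maps are varieties of submodules of a fixed $X$, not extension varieties sitting in some $\rep(\dv b)$, so you need the fibre-bundle translation between the two pictures (the analogues of Lemmas \ref{bundle1}--\ref{bundle3} of this paper). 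And your closing sentence is too quick: the nonemptiness of the intersection of the two opens only exhibits one $X$ lying in both $\mathcal{E}(M,N\ast P)$ and $\mathcal{E}(M\ast N,P)$. To conclude you must note that each open is dense in the irreducible $\mathcal{F}(M,N,P)$, hence has dense image in the common irreducible image of the projection to $X$, so that the unique dense orbit of that image is simultaneously the generic point of $\mathcal{E}(M,N\ast P)$ and of $\mathcal{E}(M\ast N,P)$.

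Part (2) contains the real gap, and you have named it yourself: everything reduces to the identity $F^G_{M,N}(0)=\delta_{G,\,M\ast N}$, and you only sketch a strategy for it. That identity is the substance of the theorem — once it is in hand, the statement that $M\mapsto u_M$ is an algebra isomorphism is immediate from associativity and the matching of structure constants — so a proof that leaves it at ``a codimension count should then show'' is incomplete. Carrying it out is not routine. One first needs the existence of Hall polynomials (Ringel's theorem for Dynkin quivers) for the evaluation at $q=0$ to make sense; then Riedtmann's formula expresses $F^G_{M,N}(q)$ as a ratio involving the number of extension classes with middle term $G$, the order of $\hom(M,N)$, and automorphism group orders, and one must control the $q$-degree of each factor. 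Showing that every non-generic orbit contributes a polynomial with zero constant term while the generic orbit contributes exactly $1$ is precisely Reineke's key proposition; it requires an actual comparison of $\dim\mathcal{E}(M,N)-\dim\mathcal{O}_G$ with the relevant $\hom$- and $\ext$-dimensions, not merely the observation that $\mathcal{O}_{M\ast N}$ is the open orbit.
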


\section{A monoid given by a generic multiplication in $q$-Schur algebras}

In this section we define a generic multiplication in the $q$-Schur algebra
$S_q(n, r)$. Via this multiplication we obtain a new algebra in $S_q(n, r)$.
We let $k$ be an algebraically closed field in this section.

Denote by
 $$\Theta^u_r=\{A\in \Theta_r| A \mbox{ is an upper triangular matrix}\}.$$
Let $A, A'\in \Theta_r^u$, define
$$\mathcal{E}(A, A')=\{(f_1, f_2)\in \mathcal{F}\times\mathcal{F}| \exists f \mbox{ such that }
(f_1, f)\in \mathcal{O}_A \mbox{ and } (f, f_2)\in \mathcal{O}_{A'}\}.
$$
We denote by $M(ij)$ the indecomposable representation of $Q$ with dimension vector
$\sum_{l=i}^{j-1}\dv e_l$, where $\dv e_l$ is the simple root of $Q$ associated to vertex $i$,
that is, $\dv e_i$ is the dimension vector of the simple module $S_i$. By the module determined by
a matrix $A\in \Theta_r^u$ we mean the module $\bigoplus_{i<j} M(ij)^{a_{ij}}$.
Note that for any $(f_1, f_2)\in \mathcal{E}(A, A')$, we have $f_2$ is a subflag of $f_1$.
Note  also that by omitting the last step of a partial flag $f$ in $\mathcal{F}$, we can view
$f$ as a projective $kQ$-module and by abuse of notation we still denote the projective
module by $f$.
Now for $A\in \Theta_r^u$, suppose that $(f, h)\in \mathcal{O}_A$ and that
$M$ is the module determined by $A$.  We have a short exact sequence
$$\xymatrix{0\ar[r] &h\ar[r]&f \ar[r] & M\ar[r]&0}, $$
that is, $h\subseteq f $ is a  projective resolution of $M$.

Let $\dv b\in \mathbb{N}^{n-1}$ and  denote
by $k^{\dv b}$ the $Q_0$-graded vector space  with $k^{b_i}$ as its $i$-th homogeneous component,
where $i$ is a vertex of $Q$.
Denote by $\hom_{\mathrm{gr}}(f, k^{\dv b})$ the set of graded linear maps between $f$  and $k^{\dv b}$,
where $f$ is a partial flag in $\mathcal{F}$ viewed as a $Q_0$-graded
vector space by omitting its last step.

\subsection{Relation between generic points in $\mathcal{E}(A, A')$ and in $\mathcal{E}(M, N)$}
For $A, A'\in \Theta_r$, we write $A\leq A'$ if $ \mathcal{O}_{A'}$ is contained
in the Zariski closure of $\mathcal{O}_{A}$. In this case we say that $(f_1, f_2)\in \mathcal{O}_{A}$
degenerates to $(f'_1, f'_2)\in \mathcal{O}_{A'}$. Lemma 3.7 in \cite{BLM} implies the
existence of generic points in $\mathcal{E}(A, A')$, in the sense that the closure of their orbit contains
orbits of all the other points in $\mathcal{E}(A, A')$.
That is, there is a unique open orbit in $\mathcal{E}(A, A')$.
In this subsection we will show that there is a nice correspondence between generic points $\mathcal{E}(A, A')$ and generic
points in subset $\mathcal{E}(M, N)$, where $M$ and $N$ are the modules determined by
$A$ and $A'$, respectively.







Let $(f_1, f_2)\in \mathcal{F}\times \mathcal{F}$ with $f_2$ a subflag of $f_1$.
Denote by $\dv a_1$ and $\dv a_2$, respectively, the dimension vectors
of the projective modules $f_1$ and $f_2$. Let $\dv b=\dv a_1-\dv a_2$.
We define some sets as follows.
$$\mathrm{Inj}(f_2, f_1)=\{\sigma\in \hom_{\field Q}(f_2, f_1)|\sigma \mbox{ is injective}\};$$
$$\mathcal{S}_1=\{(\sigma, \eta)\in \mathrm{Inj}(f_2, f_1)\times  \hom_{\mathrm{gr}}(f_1, k^{\dv b})|
\eta\mbox{ is surjective and } \eta\sigma=0\};
$$
$$\mathcal{S}'_1=\{(\sigma, \eta)\in \hom_{kQ}(f_2, f_1)\times  \hom_{\mathrm{gr}}(f_1, k^{\dv b})|
\eta\mbox{ is  surjective, } $$$$
\mbox{ker}\eta \mbox{ is a } kQ\mbox{-module and }
\eta\sigma=0\};
$$
$$\mathcal{S}_2=\{\eta \in \hom_{\mathrm{gr}}(f_1, k^{\dv b})|
\eta \mbox{ is surjective and } \mbox{ker}\eta \mbox{ is a } kQ\mbox{-module}\};
$$
$$\mathcal{S}'_2=\{(M, \eta)\in \rep(\dv b)\times \hom_{\mathrm{gr}}(f_1, k^{{\bf b}})|
\eta: f_1\rightarrow M \mbox{ is a } kQ\mbox{-homomorphism}\};
$$
$$
\mbox{Inj}_{A, A'}(f_2, f_1)=\{\sigma\in \mbox{Inj}(f_2, f_1)|\mbox{cok}(\sigma)\in \mathcal{E}(M, N) \},
$$
where $M$ and $N$ are the modules determined by $A$ and $A'$, respectively.

For convenience we denote by $\mathrm{Inj}(f_2, f_1)$ by $\mathcal{S}_3$.
We obtain some fibre bundles as follows.

\begin{lemma}[\cite{JS}]\label{bundle1}
The natural projection $\pi_1: \mathcal{S}'_1\rightarrow \mathcal{S}_2$ is a vector bundle.
\end{lemma}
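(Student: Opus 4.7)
The plan is to realise $\mathcal{S}'_1$ as the kernel of a fibrewise linear morphism of trivial vector bundles over $\mathcal{S}_2$, and to verify that this kernel has locally constant rank. First I would identify the fibre of $\pi_1$ over a point $\eta\in\mathcal{S}_2$: since by definition $K_\eta:=\ker\eta$ is a $kQ$-submodule of $f_1$, a morphism $\sigma\in\hom_{kQ}(f_2,f_1)$ satisfies $\eta\sigma=0$ if and only if $\sigma$ factors through the inclusion $K_\eta\hookrightarrow f_1$. Consequently $\pi_1^{-1}(\eta)$ is naturally identified, as a vector space, with $\hom_{kQ}(f_2,K_\eta)$.

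Next I would show that this fibre dimension is independent of $\eta$. Since $Q$ is linearly oriented of type $\mathbb{A}_{n-1}$, every $n$-step flag is a projective $kQ$-module; in particular $f_2$ is projective. Decomposing $f_2\cong\bigoplus_i P(i)^{m_i}$, where $P(i)$ is the indecomposable projective with top $S_i$, gives $\dim\hom_{kQ}(f_2,X)=\sum_i m_i\dim X_i$ for every $kQ$-module $X$, so this Hom dimension depends only on $\underline{\dim}\,X$. Because $\underline{\dim}\,K_\eta=\dv a_1-\dv b=\dv a_2$ for every $\eta\in\mathcal{S}_2$, the fibre dimension of $\pi_1$ is constant on $\mathcal{S}_2$.

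Finally I would exhibit the bundle structure via the morphism of trivial bundles over $\mathcal{S}_2$
$$\Phi:\mathcal{S}_2\times\hom_{kQ}(f_2,f_1)\longrightarrow \mathcal{S}_2\times\hom_{\mathrm{gr}}(f_2,k^{\dv b}),\qquad (\eta,\sigma)\mapsto(\eta,\eta\sigma),$$
which is fibrewise $k$-linear. By construction $\mathcal{S}'_1=\ker\Phi$, and the constancy of the fibrewise kernel dimension established above forces $\Phi$ to have locally constant rank. A standard fact then shows that $\ker\Phi$ is a subbundle of the trivial bundle on the left, and its projection onto the first factor is the required vector bundle $\pi_1$. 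The main obstacle is the constancy of $\dim\hom_{kQ}(f_2,K_\eta)$; once the projectivity of $f_2$ reduces this to a count depending only on dimension vectors, everything else is formal.
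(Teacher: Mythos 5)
Your argument is correct: the identification of the fibre over $\eta$ with $\hom_{kQ}(f_2,\ker\eta)$, the use of the projectivity of $f_2$ to see that this dimension depends only on $\underline{\dim}\,\ker\eta=\dv a_2$, and the realisation of $\mathcal{S}'_1$ as the constant-rank kernel of the bilinear morphism $(\eta,\sigma)\mapsto\eta\sigma$ together give a complete proof. The paper itself only cites \cite{JS} for this lemma and supplies no proof, but your route is exactly the standard one for such statements, so there is nothing to add.
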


\begin{lemma}[\cite{JS}]\label{bundle2}
The natural projection $\pi_2: \mathcal{S}_1\rightarrow \mathcal{S}_3$ is a principal $\gl(\dv b)$-bundle.
\end{lemma}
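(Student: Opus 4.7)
The plan is to identify the fibres of $\pi_2$ as torsors under $\gl(\dv b)$ and then construct explicit local sections, from which the principal bundle structure follows.

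First I would describe the fibre over a point. Fix $\sigma\in\mathcal{S}_3=\mathrm{Inj}(f_2,f_1)$, so $\sigma(f_2)\subseteq f_1$ is a graded subspace of dimension vector $\dv a_2$. Any $(\sigma,\eta)\in\pi_2^{-1}(\sigma)$ requires $\eta:f_1\rightarrow k^{\dv b}$ to be a surjective graded linear map with $\sigma(f_2)\subseteq\ker\eta$. A componentwise dimension count gives $\dim\ker\eta=\dv a_1-\dv b=\dv a_2$, so this inclusion is an equality and $\eta$ is determined by the induced graded linear isomorphism $\bar\eta:f_1/\sigma(f_2)\xrightarrow{\sim} k^{\dv b}$. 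The natural action of $\gl(\dv b)$ on $\mathcal{S}_1$ by $g\cdot(\sigma,\eta)=(\sigma,g\circ\eta)$ preserves $\sigma$ and acts freely and transitively on $\pi_2^{-1}(\sigma)$, identifying the fibre with the $\gl(\dv b)$-torsor of graded isomorphisms $f_1/\sigma(f_2)\xrightarrow{\sim} k^{\dv b}$.

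Next I would construct local sections. Given $\sigma_0\in\mathcal{S}_3$, choose a graded linear complement $W\subseteq f_1$ to $\sigma_0(f_2)$ together with a graded linear isomorphism $\psi:W\xrightarrow{\sim} k^{\dv b}$. Let $U_W=\{\sigma\in\mathcal{S}_3\mid W\cap\sigma(f_2)=0\}$; this is Zariski open in $\hom_{kQ}(f_2,f_1)$, contains $\sigma_0$, and for every $\sigma\in U_W$ the dimension count forces $f_1=W\oplus\sigma(f_2)$ as graded vector spaces. Writing $p_\sigma:f_1\rightarrow W$ for the projection onto $W$ along $\sigma(f_2)$, the assignment $s_W(\sigma)=(\sigma,\psi\circ p_\sigma)$ is a section of $\pi_2$ over $U_W$. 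As $\sigma_0$ varies, such opens cover $\mathcal{S}_3$.

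Combining the two steps, the map $U_W\times\gl(\dv b)\rightarrow\pi_2^{-1}(U_W)$ sending $(\sigma,g)$ to $g\cdot s_W(\sigma)=(\sigma,g\circ\psi\circ p_\sigma)$ is a $\gl(\dv b)$-equivariant isomorphism with regular inverse, exhibiting $\pi_2$ as a principal $\gl(\dv b)$-bundle. The main technical point is verifying that $\sigma\mapsto p_\sigma$ is a morphism on $U_W$; this is routine, since in a basis adapted to the decomposition $f_1=W\oplus\sigma_0(f_2)$ the matrix entries of $p_\sigma$ are rational in those of $\sigma$ with a determinant denominator that is precisely the defining nonvanishing condition of $U_W$.
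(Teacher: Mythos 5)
Your argument is correct. The paper itself gives no proof of this lemma (it is quoted from \cite{JS}), but your route is the standard and essentially the only natural one: the componentwise dimension count forcing $\ker\eta=\sigma(f_2)$ is exactly the key point, it identifies each fibre with the $\gl(\dv b)$-torsor of graded isomorphisms $f_1/\sigma(f_2)\to k^{\dv b}$, and your local sections over the opens $U_W$ (with $p_\sigma$ regular because its matrix entries are rational in $\sigma$ with denominator the defining non-vanishing condition of $U_W$) give the required local trivialisations. The only cosmetic point is that $U_W$ should be described as open in $\mathcal{S}_3=\mathrm{Inj}(f_2,f_1)$, namely the trace on $\mathcal{S}_3$ of the open condition that $f_2\stackrel{\sigma}{\to}f_1\to f_1/W$ be an isomorphism.
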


\begin{lemma}[\cite{JS}]\label{bundle3}
The natural projection $\pi_3: \mathcal{S}'_2\rightarrow \rep(\dv b)$ is  a vector bundle.
\end{lemma}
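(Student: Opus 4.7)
The plan is to recognise $\mathcal{S}'_2$ as the kernel of an algebraic morphism of trivial vector bundles over $\rep(\dv b)$ of constant rank, and then to deduce that $\pi_3$ is a vector bundle.

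First, I would unpack the fibre: for fixed $M\in\rep(\dv b)$, the set $\pi_3^{-1}(M)$ consists of graded linear maps $\eta:f_1\to k^{\dv b}$ that intertwine the $kQ$-action, so it is precisely $\hom_{kQ}(f_1,M)$. The intertwining condition requires, for every arrow $\alpha:i\to j$ of $Q$, the equation $M_\alpha\circ\eta_i=\eta_j\circ\alpha^{f_1}$, which is jointly polynomial in the coordinates of $(M,\eta)$; hence $\mathcal{S}'_2$ is closed in the trivial bundle $E:=\rep(\dv b)\times\hom_{\mathrm{gr}}(f_1,k^{\dv b})$.

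Next, I would exploit that $f_1$, viewed as a $kQ$-module after dropping its top step, is projective: for the linearly oriented $Q$ a partial flag $V_1\subseteq\cdots\subseteq V_{n-1}$ with injective transition maps decomposes as $f_1\cong\bigoplus_i P_i^{m_i}$, where $P_i=M(i,n)$ is the indecomposable projective at vertex $i$. Since $\hom_{kQ}(P_i,M)\cong M_i$, the fibre dimension
$$\dim\hom_{kQ}(f_1,M)=\sum_i m_i\,b_i$$
is independent of $M$.

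Finally, I would encode the intertwining relations as an $\mathcal{O}_{\rep(\dv b)}$-linear morphism of trivial bundles
$$\Phi:E\longrightarrow \rep(\dv b)\times\bigoplus_{\alpha:i\to j}\hom_k\bigl((f_1)_i,k^{b_j}\bigr),\qquad (M,\eta)\longmapsto \bigl(M,\,(M_\alpha\eta_i-\eta_j\alpha^{f_1})_\alpha\bigr),$$
whose kernel is exactly $\mathcal{S}'_2$. The constant fibre dimension above, via rank-nullity, forces $\Phi$ to have constant rank, so $\ker\Phi$ is a vector subbundle of $E$, and therefore $\pi_3$ is a vector bundle. The main obstacle I expect is precisely this last step of promoting a closed subset with constant-dimensional linear fibres to a locally trivial vector bundle: the constant-rank argument for algebraic bundle morphisms over the smooth base $\rep(\dv b)$ handles it cleanly, and it hinges crucially on the projectivity of $f_1$ for the fibre-dimension count.
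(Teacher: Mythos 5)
Your proof is correct, and it supplies an argument that the paper itself omits: Lemma~\ref{bundle3} is only quoted from \cite{JS}, with no proof given here. Your route is the standard one and it isolates the right point, namely that $f_1$ (with its top step dropped) is projective, so the fibre $\pi_3^{-1}(M)=\hom_{kQ}(f_1,M)$ has dimension $\sum_i m_i b_i$ independent of $M$, and the constant-rank kernel of your morphism $\Phi$ of trivial bundles over the smooth base $\rep(\dv b)$ is a subbundle. One small streamlining worth noting: writing $f_1\cong\bigoplus_i P_i^{m_i}$ and evaluating $\eta$ at the canonical generators of the $P_i$ gives a natural isomorphism $\hom_{kQ}(f_1,M)\cong\bigoplus_i M_i^{m_i}$ that is algebraic in $M$, so $\mathcal{S}'_2$ is in fact a \emph{trivial} vector bundle over $\rep(\dv b)$, and the constant-rank step can be bypassed entirely.
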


Note that  any $\eta\in S_2$ determines a unique module $M\in \rep(\bf b)$ and this defines
an open embedding of $\mathcal{S}_2$ into $\mathcal{S}_2'$. So we can view
$\mathcal{S}_2$ as an open subset of $\mathcal{S}_2'$.

\begin{lemma}
(1) $\mathrm{Inj}_{A, A'}(f_2, f_1)=\pi_2(\mathcal{S}_1\cap \pi^{-1}_1(\mathcal{S}_2\cap \pi^{-1}_3(\mathcal{E}(M, N)))).$

(2) $\mathrm{Inj}_{A, A'}(f_2, f_1)$ is irreducible.
\end{lemma}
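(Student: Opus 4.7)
The plan is to prove (1) by a direct check on both inclusions, using a dimension-count identification of $\ker\eta$ with $\sigma(f_2)$, and to deduce (2) from (1) by propagating irreducibility through the chain of vector bundles and open immersions already in place.

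For (1), the key observation is that for any $(\sigma,\eta)\in\mathcal{S}_1$ the inclusion $\sigma(f_2)\subseteq\ker\eta$ forced by $\eta\sigma=0$ is actually an equality, since both subspaces of $f_1$ have total dimension $|\dv a_1|-|\dv b|=|\dv a_2|$. In particular $\ker\eta=\sigma(f_2)$ is a $kQ$-submodule, and the graded isomorphism $f_1/\sigma(f_2)\cong k^{\dv b}$ induced by $\eta$ transports the $kQ$-structure on $\cok(\sigma)$ to the module $M'\in\rep(\dv b)$ attached to $\eta$ under the open embedding $\mathcal{S}_2\hookrightarrow\mathcal{S}'_2$. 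The inclusion ``$\supseteq$'' is then immediate: the module $M'=\pi_3(\pi_1(\sigma,\eta))$ lies in $\mathcal{E}(M,N)$ and is isomorphic to $\cok(\sigma)$, so by $\gl(\dv b)$-invariance of $\mathcal{E}(M,N)$ one gets $\cok(\sigma)\in\mathcal{E}(M,N)$. Conversely, for any $\sigma\in\mathrm{Inj}_{A,A'}(f_2,f_1)$, composing the canonical projection $f_1\to\cok(\sigma)$ with any graded isomorphism $\cok(\sigma)\cong k^{\dv b}$ produces $\eta$ such that $(\sigma,\eta)$ sits in $\mathcal{S}_1\cap\pi_1^{-1}(\mathcal{S}_2\cap\pi_3^{-1}(\mathcal{E}(M,N)))$ with $\pi_2(\sigma,\eta)=\sigma$.

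For (2), I would propagate irreducibility along the chain. The subset $\mathcal{E}(M,N)\subseteq\rep(\dv b)$ is irreducible by the lemma recalled in Section~\ref{monoidreineke}. Because $\pi_3$ is a vector bundle (Lemma~\ref{bundle3}), $\pi_3^{-1}(\mathcal{E}(M,N))$ is irreducible; intersecting with the open subset $\mathcal{S}_2\subseteq\mathcal{S}'_2$ preserves irreducibility provided the intersection is non-empty. Pulling back under the vector bundle $\pi_1$ (Lemma~\ref{bundle1}) again preserves irreducibility, and since ``$\sigma$ injective'' is an open condition on $\hom_{kQ}(f_2,f_1)$, the further intersection with $\mathcal{S}_1$ is open in an irreducible set and is therefore irreducible as soon as it is non-empty. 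Finally, applying $\pi_2$ — a principal $\gl(\dv b)$-bundle by Lemma~\ref{bundle2}, in particular continuous — yields an irreducible image, which by (1) equals $\mathrm{Inj}_{A,A'}(f_2,f_1)$.

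The main technical point is non-emptiness at each open intersection, and this is what I would need to handle with some care: a single well-chosen point suffices. The natural candidate comes from the horseshoe lemma applied to a short exact sequence $0\to N\to M'\to M\to 0$ representing a class in $\mathcal{E}(M,N)$; this yields a projective resolution of $M'$ of the precise shape $f_2\hookrightarrow f_1$, producing an injection $\sigma$ with $\cok(\sigma)\cong M'$ and a canonical $\eta$ that together witness non-emptiness in the innermost intersection — and hence, by construction, at every stage of the chain.
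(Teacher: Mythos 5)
Your argument is correct and follows the same route as the paper's (much terser) proof: both inclusions of (1) via the identification $\ker\eta=\im\,\sigma$ forced by $\eta\sigma=0$ and the dimension count, and (2) by pulling the irreducibility of $\mathcal{E}(M,N)$ back through the bundles $\pi_3,\pi_1$, intersecting with open conditions, and pushing forward along the continuous map $\pi_2$. One small caveat on your non-emptiness check: the horseshoe lemma applied to $0\to N\to M'\to M\to 0$ yields a resolution of the shape $f_2\oplus f\hookrightarrow f\oplus f_1$, not $f_2\hookrightarrow f_1$; the single point you need is supplied instead by the composite inclusion $f_2\hookrightarrow f\hookrightarrow f_1$ coming from $(f_1,f)\in\mathcal{O}_A$ and $(f,f_2)\in\mathcal{O}_{A'}$, whose cokernel is an extension of $M$ by $N$ --- exactly the splicing diagram the paper uses in the proof of the following lemma.
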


\begin{proof}
Following the definitions of $\pi_1$ and $\pi_3$, $\mathrm{cok}\sigma\in \mathcal{E}(M, N)$ for
any $(\sigma, \eta)\in \mathcal{S}_1\cap \pi^{-1}_1(\mathcal{S}_2\cap \pi^{-1}_3(\mathcal{E}(M, N)))$.
Therefore $\sigma=\pi_2((\sigma, \eta))\in  \mathrm{Inj}_{A, A'}(f_2, f_1)$. On the other hand,
suppose that $\sigma \in \mathrm{Inj}_{A, A'}(f_2, f_1)$. Then
$\sigma\in  \pi_2(\mathcal{S}_1\cap \pi^{-1}_1(\mathcal{S}_2\cap \pi^{-1}_3(X)))$, where
$X$ is the module determined by $\eta$ for a preimage $(\sigma, \eta)\in \pi_2^{-1}(\sigma)$.
This proves (1). Now (2) follows from (1) and Lemmas \ref{bundle1}-\ref{bundle3}.
\end{proof}

Let $$\mathcal{E}'(A, A')=\{(f_1, f)\in \mathcal{E}(A, A')|f\in \mathcal{F}\}.$$
Define $$\pi: \mathrm{Inj}_{A, A'}(f_2, f_1)\rightarrow \mathcal{F}\times \mathcal{F}, \; \sigma\mapsto (f_1, \mathrm{Im \sigma}),$$
where $\mathrm{Im}\sigma$ can be viewed as a flag in $\mathcal{F}$ with the last step the natural embedding of
$\mathrm{Im}\sigma(f_1)_{n-1}$ into $V$.

\begin{lemma}\label{irred}
(1) $\mathrm{Im}\pi=\mathcal{E}'(A, A')$.

(2) $\mathcal{E}'(A, A')$ is irreducible.
\end{lemma}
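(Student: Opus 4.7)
The plan is to establish (1) by proving the two set-theoretic inclusions directly, and then to deduce (2) at once from (1) together with the irreducibility of $\mathrm{Inj}_{A,A'}(f_2,f_1)$ proved in the preceding lemma.

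For $\im\pi\subseteq\mathcal{E}'(A,A')$, suppose $\sigma\in\mathrm{Inj}_{A,A'}(f_2,f_1)$. Then $\cok\sigma\in\mathcal{E}(M,N)$ sits in a short exact sequence $0\to N\to\cok\sigma\to M\to 0$. Pulling this back along the quotient $f_1\to\cok\sigma$ produces a $kQ$-submodule $\im\sigma\subseteq f\subseteq f_1$ with $f/\im\sigma\cong N$ and $f_1/f\cong M$. The short exact sequence $0\to h\to f\to M\to 0$ recorded just before Section~3.1 (identifying a pair $(f,h)\in\mathcal{O}_A$ with the projective resolution of its determined module $M$) then translates these identifications into $(f_1,f)\in\mathcal{O}_A$ and $(f,\im\sigma)\in\mathcal{O}_{A'}$, so $(f_1,\im\sigma)\in\mathcal{E}(A,A')=\mathcal{E}'(A,A')$.

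Conversely, given $(f_1,f)\in\mathcal{E}'(A,A')$ with witnessing intermediate flag $f'$ satisfying $(f_1,f')\in\mathcal{O}_A$ and $(f',f)\in\mathcal{O}_{A'}$, the sub-flag $f\subseteq f_1$ has dimension vector $\dv a_1-\dv b=\dv a_2$ as a $kQ$-module, matching that of $f_2$. Flags in $\mathcal{F}$ correspond, after deleting the last step, to projective $kQ$-modules of the linearly oriented quiver $Q$, and such projectives are determined up to isomorphism by their dimension vector; hence $f\cong f_2$. Any $kQ$-isomorphism $\phi\colon f_2\to f$ composed with the inclusion $f\hookrightarrow f_1$ yields $\sigma\in\mathrm{Inj}(f_2,f_1)$ with $\im\sigma=f$, and the chain $f\subseteq f'\subseteq f_1$ realises $\cok\sigma=f_1/f$ as an extension of $M$ by $N$, so $\sigma\in\mathrm{Inj}_{A,A'}(f_2,f_1)$ and $\pi(\sigma)=(f_1,f)$. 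This completes (1).

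For (2), the map $\pi$ is a morphism of algebraic varieties since its definition is polynomial in the matrix entries of $\sigma$; its image is therefore the continuous image of the irreducible variety $\mathrm{Inj}_{A,A'}(f_2,f_1)$ and is hence irreducible, and combined with (1) this yields irreducibility of $\mathcal{E}'(A,A')$. The point requiring the most care is the compatibility between the flag-level and the $kQ$-module-level pictures: one must check that appending $V$ as the last step of $\im\sigma$ really produces a flag in $\mathcal{F}$ whose relative position with $f_1$ is exactly the orbit $\mathcal{O}_A$, and similarly for the intermediate flag $f$. This is implicit in the BLM dictionary, but is worth spelling out briefly so that the cokernel-level calculation is unambiguously identified with the flag-level statement.
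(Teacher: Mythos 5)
Your argument is correct and follows essentially the same route as the paper: the forward inclusion is the paper's $3\times 3$ commutative diagram (your pullback of $0\to N\to\cok\sigma\to M\to 0$ along $f_1\to\cok\sigma$ constructs exactly the intermediate flag $\ker\xi\eta$ appearing there), the reverse inclusion is the paper's observation that the natural embedding $f_2\cong f\subseteq f_1$ lies in $\mathrm{Inj}_{A,A'}(f_2,f_1)$, and (2) is deduced in both cases from irreducibility of $\mathrm{Inj}_{A,A'}(f_2,f_1)$ via the morphism $\pi$.
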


\begin{proof}
Let $\sigma\in  \mathrm{Inj}_{A, A'}(f_2, f_1)$. Then by the following diagram,
$$
\xymatrix{
f_2\ar[d]\ar[r]& f_2\ar[d]^{\sigma}\ar[r]& 0\ar[d]\\
\mathrm{ker}\xi\eta\ar[r]\ar[d] & f_1 \ar[r]^{\xi\eta}\ar[d]^\eta&M, \ar[d]\\
N\ar[r]&\mathrm{cok}\sigma\ar[r]_\xi&M
}
$$
where each square commutes and all rows and columns are short exact sequences, we know that
$(f_1, \im\sigma)\in \mathcal{E}'(A, A')$. On the other hand by the definition of $ \mathcal{E}'(A, A')$,
for any $(f_1, f)\in \mathcal{E}'(A, A')$, the natural embedding $f_2\cong f\subseteq f_1$ is in
$\mathrm{Inj}_{A, A'}(f_2, f_1)$. Now the irreducibility of $ \mathcal{E}'(A, A')$ follows from
that of $ \mathrm{Inj}_{A, A'}(f_2, f_1)$.
\end{proof}

As a consequence of Lemma \ref{irred}
we can see the existence of a unique dense open orbits in $\mathcal{E}(A, A')$. Indeed,
by Lemma \ref{irred} and the surjective map $ \mathcal{E}'(A, A')\times \gl(r)\rightarrow \mathcal{E}(A, A')$,
the set $\mathcal{E}(A, A')$ is irreducible.
Since there are only finitely many $\gl(r)$-orbits in $\mathcal{E}(A, A')$, there exists
a unique dense open orbits in $\mathcal{E}(A, A')$.

\begin{definition} Let $\mathcal{O_{A''}}$ be the dense open orbit in $\mathcal{E}(A, A')$.
We say that an injection $\sigma:f'\rightarrow f$ is generic in
$\mathrm{Inj}_{A, A'}(f', f)$ if the pair of flags $(f,\; \im \sigma)$ is contained in $\mathcal{O_{A''}}$.
\end{definition}

\begin{proposition}\label{genericprop}
Let $(\sigma, \eta)\in\mathcal{S}_1$, $(f_1, f)\in \mathcal{O}_{A'}$ and
$(f, f_2)\in \mathcal{O}_{A''}$. Then $\sigma$ is generic in $\mathrm{Inj}_{A, A'}(f_2, f_1)$
if and only if the module determined
by $\eta$ is generic in $\mathcal{E}(M, N)$, where $M$ and $N$ are the modules determined by
$A$ and $A'$, respectively.
\end{proposition}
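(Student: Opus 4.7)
The plan is to transport the notion of genericity through the fibre bundles of Lemmas \ref{bundle1}--\ref{bundle3}. First I would identify the module determined by $\eta$ with $\cok\sigma$: surjectivity of $\eta$, injectivity of $\sigma$, the relation $\eta\sigma=0$, and the dimension identity $\dim\field^{\dv b}=\dim f_1-\dim f_2$ force $\ker\eta=\im\sigma$, so the module determined by $\eta$ is isomorphic to $f_1/\im\sigma=\cok\sigma$. The proposition therefore reduces to the equivalence: $\sigma$ is generic in $\mathrm{Inj}_{A,A'}(f_2,f_1)$ if and only if $\cok\sigma$ is generic in $\mathcal{E}(M,N)$.

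Set $\mathcal{Z}=\mathcal{S}_1\cap\pi_1^{-1}(\mathcal{S}_2\cap\pi_3^{-1}(\mathcal{E}(M,N)))$. From the preceding lemma, $\pi_2(\mathcal{Z})=\mathrm{Inj}_{A,A'}(f_2,f_1)$ and $\mathcal{Z}$ is irreducible. Out of $\mathcal{Z}$ there are two natural surjective morphisms: $\pi_2|_\mathcal{Z}\colon\mathcal{Z}\lra\mathrm{Inj}_{A,A'}(f_2,f_1)$, and the restriction of $\pi_3\circ\pi_1$, which sends $(\sigma,\eta)$ to $\cok\sigma\in\mathcal{E}(M,N)$. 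Because $\pi_2$ is a principal $\gl(\dv b)$-bundle while $\pi_1$ and $\pi_3$ are vector bundles, both maps are open with irreducible fibres. Writing $O_1$ and $O_2$ for the unique dense open orbits in $\mathrm{Inj}_{A,A'}(f_2,f_1)$ and $\mathcal{E}(M,N)$, the preimages $U_1:=\pi_2^{-1}(O_1)$ and $U_2:=(\pi_3\pi_1)^{-1}(O_2)$ are then both dense open subsets of $\mathcal{Z}$.

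To conclude, I would show $U_1=U_2$, which gives the ``iff'' for every $(\sigma,\eta)\in\mathcal{Z}$. I would argue this by matching the two partitions of $\mathcal{Z}$ pulled back from the orbit decompositions of $\mathrm{Inj}_{A,A'}(f_2,f_1)$ and of $\mathcal{E}(M,N)$: both coincide with the partition of $\mathcal{Z}$ into orbits of $\mathrm{Aut}(f_1)\times\mathrm{Aut}(f_2)\times\gl(\dv b)$, indexed in either case by the isomorphism class of $\cok\sigma$. The main obstacle is this partition matching, which amounts to the bijection between flag orbits in $\mathrm{Inj}_{A,A'}(f_2,f_1)$ and isomorphism classes in $\mathcal{E}(M,N)$ given by $\sigma\mapsto\cok\sigma$; the forward direction is immediate, while the reverse uses projectivity of $f_1$ to lift an isomorphism $\cok\sigma\cong\cok\sigma'$ to an automorphism of $f_1$ carrying $\im\sigma$ onto $\im\sigma'$. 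Granting this, the unique dense members of the two partitions coincide, so $U_1=U_2$ and the proposition follows.
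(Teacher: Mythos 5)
Your proposal is correct and takes essentially the same route as the paper: both transport the two dense open orbits through the bundles of Lemmas \ref{bundle1}--\ref{bundle3} over the irreducible set $\mathcal{S}_1\cap\pi_1^{-1}(\mathcal{S}_2\cap\pi_3^{-1}(\mathcal{E}(M,N)))$ and compare them there, the paper doing so by intersecting the two resulting dense open subsets of the irreducible set $\mathrm{Inj}_{A,A'}(f_2,f_1)$ and concluding that the generic point $X$ of $\mathcal{E}(M,N)$ is the module determined by $A''$. Your additional step of proving $U_1=U_2$ by matching the two orbit partitions (lifting isomorphisms of cokernels to automorphisms of the projective $f_1$) only makes explicit the orbit--cokernel correspondence that the paper's proof uses implicitly.
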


\begin{proof}
Suppose that $\mathcal{O}_{A''}$ is the dense open orbit in $ \mathcal{E}(A, A')$ and that
$ \mathcal{O}_X$ is the dense open orbit in $ \mathcal{E}(M, N)$.
By Lemmas \ref{bundle1}-\ref{bundle3},
$\pi_2(\mathcal{S}_1\cap \pi_1^{-1}(\mathcal{S}_2\cap\pi_3^{-1}(\mathcal{O}_X)))$
is open in $\mathrm{Inj}_{A, A'}(f_2, f_1)$. By Lemma \ref{irred}, $\pi^{-1}(\mathcal{O}_{A''}\cap \mathcal{E}'(A, A') )$
is open in $\mathrm{Inj}_{A, A'}(f_2, f_1)$. Since  $\mathrm{Inj}_{A, A'}(f_2, f_1)$ is irreducible, the
intersection
$\pi_2(\mathcal{S}_1\cap \pi_1^{-1}(\mathcal{S}_2\cap\pi_3^{-1}(\mathcal{O}_X)))
\cap \pi^{-1}(\mathcal{O}_{A''}\cap \mathcal{E}'(A, A') )$
is non-empty. Therefore, $X$ is the module determined by $A''$. This finishes the proof.
\end{proof}

\subsection{A generic multiplication in $S_q(n, r)$} We
 now define a multiplication, called a generic multiplication, by

$$e_A\circ e_{A'}= \left\{\begin{tabular}{ll}$e_{A''}$ & if  $\mathcal{E}(A, A')\not=\emptyset$,\\
$0$ &otherwise,
\end{tabular}\right.$$
where $\mathcal{O}_{A''}$ is the dense open orbit in $\mathcal{E}(A, A')$.

\begin{proposition}\label{associativity}
Let $A$, $A'$, $A''\in \Theta_r^u$. Then $(e_A\circ e_{A'})\circ e_{A''}=e_A\circ(e_{A'}\circ e_{A''})$
\end{proposition}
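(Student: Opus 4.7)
The plan is to reduce associativity of $\circ$ to associativity of Reineke's generic-extension monoid $\mathcal{M}$ from Theorem \ref{hallmonoid}(1), using Proposition \ref{genericprop} as the dictionary between matrices in $\Theta_r^u$ and modules.

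First I would record the following recovery statement: a matrix $A\in\Theta_r^u$ is uniquely determined by the pair $(M_A,\mathrm{ro}(A))$, where $M_A=\bigoplus_{i<j}M(ij)^{a_{ij}}$ is the module determined by $A$. Indeed, the off-diagonal entries $a_{ij}$ with $i<j$ are recovered from $M_A$ by Krull-Schmidt, and since $A$ is upper triangular the diagonal entries are then forced by $a_{ii}=\mathrm{ro}_i(A)-\sum_{j>i}a_{ij}$. The analogous statement with $\mathrm{co}(A)$ in place of $\mathrm{ro}(A)$ holds as well.

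Next, suppose $\mathcal{E}(A,A')\neq\emptyset$, so $e_A\circ e_{A'}=e_B$ for some $B\in\Theta_r^u$. For any $(f_1,f_2)\in\mathcal{O}_B$ there exists an $f$ with $(f_1,f)\in\mathcal{O}_A$ and $(f,f_2)\in\mathcal{O}_{A'}$, so the dimension-vector types of $f_1$ and $f_2$ are $\mathrm{ro}(A)$ and $\mathrm{co}(A')$; hence $\mathrm{ro}(B)=\mathrm{ro}(A)$ and $\mathrm{co}(B)=\mathrm{co}(A')$. Moreover $M_B=f_1/f_2$, which by Proposition \ref{genericprop} is the generic extension $M_A\ast M_{A'}$ in $\mathcal{M}$. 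The same orbit-projection argument, together with $\mathrm{GL}(V)$-transitivity on flags of a fixed type, shows $\mathcal{E}(A,A')\neq\emptyset$ if and only if $\mathrm{co}(A)=\mathrm{ro}(A')$.

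The rest is bookkeeping. If $\mathrm{co}(A)\neq\mathrm{ro}(A')$ or $\mathrm{co}(A')\neq\mathrm{ro}(A'')$, a short case check using the just-noted identities $\mathrm{ro}$ and $\mathrm{co}$ of any intermediate product shows that both $(e_A\circ e_{A'})\circ e_{A''}$ and $e_A\circ(e_{A'}\circ e_{A''})$ vanish. Otherwise each side is some $e_C$ with $C\in\Theta_r^u$, $\mathrm{ro}(C)=\mathrm{ro}(A)$, $\mathrm{co}(C)=\mathrm{co}(A'')$, and with $M_C=(M_A\ast M_{A'})\ast M_{A''}$ on the left, $M_C=M_A\ast(M_{A'}\ast M_{A''})$ on the right. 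Theorem \ref{hallmonoid}(1) equates these two modules, and the uniqueness recorded in the first step then forces the two matrices $C$ to coincide.

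The main obstacle will be the verification that the open orbit $\mathcal{O}_B\subseteq\mathcal{E}(A,A')$ has row-sum $\mathrm{ro}(A)$ and column-sum $\mathrm{co}(A')$; once this and Proposition \ref{genericprop} are in place the argument is purely formal, and all the real work is done by the associativity of $\ast$ in $\mathcal{M}$.
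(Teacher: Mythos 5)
Your proposal is correct and follows essentially the same route as the paper: both arguments dispose of the vanishing cases and then transfer associativity from Reineke's generic-extension monoid via Proposition \ref{genericprop}. The paper states this in three lines, while you supply the bookkeeping it leaves implicit (recovering $A$ from $(M_A,\mathrm{ro}(A))$, and matching row/column sums of the open orbit), all of which checks out.
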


\begin{proof}
By the definition of the multiplication $\circ$, we see that $(e_A\circ e_{A'})\circ e_{A''}=0$ implies
that $e_A\circ(e_{A'}\circ e_{A''})=0$ and vice versa. So we may assume that neither of them is zero.
Let $M$, $N$, $L$ be the module determined by $A$, $A'$, $A''$, respectively. By Lemma 3.1 in \cite{Reineke}, we
know that $(M\ast N)\ast L=M\ast(N\ast L)$. Now the proof follows from Proposition \ref{genericprop}.
\end{proof}

We can now state the main result of this section.

\begin{theorem}\label{algebrastructure}
$\mathbb{Q}(\{e_A|A\in \Theta_r^u\}, +, \circ)$ is an algebra with unit $\sum_{D\in D_r}e_D$.
\end{theorem}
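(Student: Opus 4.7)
The plan is as follows. Since $\circ$ is defined on the basis $\{e_A : A \in \Theta_r^u\}$ and extended $\mathbb{Q}$-bilinearly, distributivity is automatic; closure of $\circ$ in $\mathbb{Q}\{e_A : A\in \Theta_r^u\}$ follows from the observation that $A\in\Theta_r$ is upper triangular iff the second flag of every $(f_1,f_2)\in\mathcal{O}_A$ is a subflag of the first, so for $A,A'\in\Theta_r^u$ and $(f_1,f)\in\mathcal{O}_A$, $(f,f_2)\in\mathcal{O}_{A'}$, one has $f_2\subseteq f\subseteq f_1$, forcing the resulting $A''$ to be upper triangular. Associativity is Proposition \ref{associativity}. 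Thus the only remaining task is to check that $u := \sum_{D\in D_r} e_D$ is a two-sided identity.

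The key observation is a description of $\mathcal{O}_D$ for diagonal $D \in D_r$ with entries $(d_1,\ldots,d_n)$: the orbit consists exactly of the diagonal pairs $(f,f)$ for which $f$ has jumps $\dim V_i - \dim V_{i-1} = d_i$. This follows by a direct computation from the defining formula $a_{ij}=\dim(V_{i-1}+V_i\cap V'_j)-\dim(V_{i-1}+V_i\cap V'_{j-1})$ together with the bijection $\mathcal{O}_A\leftrightarrow A$. Consequently, for fixed $A\in\Theta_r^u$, the set $\mathcal{E}(A,D)$ is nonempty iff there exists $(f_1,f)\in\mathcal{O}_A$ with $(f,f)\in\mathcal{O}_D$, which forces $D$ to be the unique diagonal matrix $D^A\in D_r$ whose diagonal equals $\mathrm{co}(A)$. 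For this $D^A$ one has $\mathcal{E}(A,D^A)=\mathcal{O}_A$, a single orbit, so its dense open orbit is $\mathcal{O}_A$ itself and $e_A\circ e_{D^A}=e_A$. Summing over $D\in D_r$ gives $e_A\circ u = e_A$, and a symmetric argument using the row sums $\mathrm{ro}(A)$ gives $u\circ e_A = e_A$.

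No serious obstacle is anticipated. The only step requiring care is the identification of the orbits indexed by diagonal matrices with the diagonal embedding $\mathcal{F}\hookrightarrow\mathcal{F}\times\mathcal{F}$ stratified by dimension sequence; once this is in place, the unit calculation is purely formal and the remaining axioms follow from the bilinear extension together with Proposition \ref{associativity}.
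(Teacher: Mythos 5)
Your proposal is correct and follows essentially the same route as the paper: both reduce to checking that $\sum_{D\in D_r}e_D$ is a two-sided unit by observing that pairs in a diagonal orbit $\mathcal{O}_D$ have $f_1=f_2$, that only the single diagonal matrix with diagonal $\mathrm{co}(A)$ (resp.\ $\mathrm{ro}(A)$) contributes, and that for it $\mathcal{E}(A,D^A)=\mathcal{O}_A$ is a single orbit so the generic product is $e_A$. Your explicit verification of closure (upper triangularity of $A''$ via the subflag characterisation) is a small addition the paper leaves implicit, but the argument is the same.
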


\begin{proof}
We need only to show that $\sum_{D\in D_r}e_D$ is the unit. Let $D$ be a diagonal matrix and let
$(f_1, f_2)\in \mathcal{O}_D$. Then $f_1=f_2$. For any $A\in \Theta_r^u$, by the definition of
the generic mulitiplication,
$$e_A\circ \sum_{D\in D_r}e_D=e_A \circ e_C.
$$
where $C$ is the diagnal matrix $\mathrm{diag}(\sum_ja_{1j}, \cdots, \sum_ja_{nj})$.
Since $e_Ae_C=\sum_{B}g_{A, C, B}e_B$, where $(f_1, f)\in \mathcal{O}_B$ and $g_{A,  C, B}=|\{f|(f_1, f)\in \mathcal{O}_A,
(f, f)\in \mathcal{O}_C\}|$,  we see that $e_Ae_C=e_A$.
Therefore $e_A\circ e_C=e_A$.
Similarly, $(\sum_{D\in D_r}e_D)\circ e_A=e_A$. Therefore $\sum_{D\in D_r}e_D$ is the unit.
\end{proof}

We denote the algebra $\mathbb{Q}(\{e_A|A\in \Theta_r^u\}, +, \circ)$ in Theorem \ref{algebrastructure}
by $S_0^+$.

\section{The algebra $S_0^+$ as a quotient and 0-Schur algebras}
We have two tasks in this section. We will first prove that a certain subalgebra of $S_0^+$
is a quotient of the monoid algebra defined in Section \ref{monoidreineke}. We will then prove that this
subalgebra gives a geometric realisation of a positive part of 0-Schur algebras.

It is well-known that the specialisation of $S_q(n, r)$ at $q=1$ gives us the classical Schur algebra
$S(n, r)$ of type $\mathbb{A}$. Much about the structure and representation theory of $S(n, r)$ is known,
see for example \cite{GreenJ}. A natural question is to consider the specialisation of $S_q(n, r)$
at $q=0$, which is called $0$-Schur algebra and denoted by $S_0(n, r)$. $0$-Schur algebras have been
studied in \cite{Donkin, KrobTh, Solomon}. In this section the $0$-Schur algebras will be studied  from a
different point of view, that is, via a modified version $\theta: {\bf H}_q(Q) \rightarrow S_q(n, r)$ of the map
$\theta: U_\mathcal{A}(\mathrm{gl}_n)\rightarrow S_v(n, r)$.

We call $\theta({\bf H}_q(Q))$ the positive part of the $q$-Schur algebra, and denote it by
$S^+_q(n, r)$.  Denote the specialisation of $S_q^+(n, r)$ at $q=0$ by $S_0^+(n, r)$.
Denote by $S_0^{++}$ the subalgebra of $S_0^+$, generated by $l_{A,r}=\sum_De_{A+D}$, where $A$ is a strict upper
triangle matrix with its entries non-negative intergers and the sum is taken over all diagnal matrices
in $D_{r-\sum_{i, j}a_{ij}}$.

\subsection{A modified version of $\theta$}

 For convenience we denote
by $E_i$ the element $l_{E_{i, i+1}, r}$ in $S_q(n, r)$. We have the following result.

\begin{proposition}\label{serre}
The elements $E_1, \dots, E_{n-1}$ satisfy the following modified quantum Serre relations:
$$
E_i^2E_j-(q+1)E_iE_jE_i+qE_jE_i^2=0 \mbox{ for } |i-j|=1 \mbox{ and }
$$
$$
E_iE_j-E_jE_i=0 \mbox{ for } |i-j|> 1.
$$
\end{proposition}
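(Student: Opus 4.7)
The plan is to reduce each product in the Serre combination to an explicit sum of basis elements $e_A$ by iterated application of Lemma~\ref{Lemma3.2}, and then verify cancellation by a short polynomial identity in $q$. The key preliminary is a master formula for the left action of $E_i$ on a basis element $e_A$: since $E_i=\sum_{D\in D_{r-1}} e_{E_{i,i+1}+D}$ and $e_Be_A=0$ unless $\mathrm{co}(B)=\mathrm{ro}(A)$, at most one summand contributes (the $D$ forced by column/row matching, which requires $\mathrm{ro}(A)_{i+1}\ge 1$), and Lemma~\ref{Lemma3.2} then yields
$$E_i\,e_A \;=\; \sum_{p:\,a_{i+1,p}>0} v^{2\sum_{l>p}a_{i,l}}\,\frac{v^{2(a_{i,p}+1)}-1}{v^2-1}\; e_{A+E_{i,p}-E_{i+1,p}}.$$
This is the only computational tool needed below.

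For $|i-j|>1$, row $i+1$ of $A=E_{j,j+1}+D$ is purely diagonal (as $j\neq i,i+1$), so only $p=i+1$ contributes in the master formula, with coefficient $1$; summing over $D$ gives $E_iE_j=\sum_{D''\in D_{r-2}} e_{E_{i,i+1}+E_{j,j+1}+D''}$. The identical computation with $i$ and $j$ interchanged gives the same sum for $E_jE_i$, proving commutativity.

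For $|i-j|=1$ I treat $j=i+1$; the opposite sign is handled by a parallel computation. Introduce the bookkeeping sums
\begin{align*}
M &= \textstyle\sum_{D\in D_{r-2}} e_{2E_{i,i+1}+D}, &
N_{11} &= \textstyle\sum_{D\in D_{r-2}} e_{E_{i,i+1}+E_{i+1,i+2}+D}, \\
N_{02} &= \textstyle\sum_{D\in D_{r-1}} e_{E_{i,i+2}+D}, &
N_1 &= \textstyle\sum_{D\in D_{r-3}} e_{2E_{i,i+1}+E_{i+1,i+2}+D}, \\
N_2 &= \textstyle\sum_{D\in D_{r-2}} e_{E_{i,i+1}+E_{i,i+2}+D}. & &
\end{align*}
Successive applications of the master formula, always acting by the leftmost factor on the previously computed basis sum, then yield $E_i^2=(q+1)M$, $E_{i+1}E_i=N_{11}$, $E_iE_{i+1}=N_{11}+N_{02}$, $E_iN_{11}=(q+1)N_1+N_2$, $E_iN_{02}=q\,N_2$ and $E_{i+1}M=N_1$. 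The factor $q+1$ arises from $\frac{v^{2(a_{i,p}+1)}-1}{v^2-1}$ at $p=i+1$ when meeting an existing entry $a_{i,i+1}=1$, and the factor $q$ arises from the Gauss-weight $v^{2\sum_{l>p}a_{i,l}}=v^2$ in the expansion of $E_iN_{02}$, where the single entry $a_{i,i+2}=1$ sits to the right of $p=i+1$. Assembling by associativity, $E_i^2E_{i+1}=(q+1)(N_1+N_2)$, $E_iE_{i+1}E_i=(q+1)N_1+N_2$, and $E_{i+1}E_i^2=(q+1)N_1$, so
$$E_i^2E_{i+1}-(q+1)E_iE_{i+1}E_i+qE_{i+1}E_i^2 \;=\; (q+1)\bigl[1-(q+1)+q\bigr]N_1+(q+1)\bigl[1-1\bigr]N_2 \;=\; 0.$$

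The main obstacle is the bookkeeping in $E_iN_{11}$, which produces two different shifts (at $p=i+1$ and at $p=i+2$) with genuinely different coefficients, together with tracking how the ambient diagonal range shifts among $D_{r-1}, D_{r-2}, D_{r-3}$ as each successive left action imposes a new row-sum constraint; once the five auxiliary sums above are named, the final cancellation is a one-line check in $\mathbb{Z}[q]$.
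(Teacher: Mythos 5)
Your proof is correct and follows essentially the same route as the paper: both reduce every product to iterated applications of Lemma~\ref{Lemma3.2} (your ``master formula'' is exactly the observation that only one diagonal summand of $E_i$ has matching column vector), and your auxiliary sums $M, N_{11}, N_{02}, N_1, N_2$ are precisely the paper's elements $l_{2E_{i,i+1},r}$, $l_{E_{i,i+1}+E_{i+1,i+2},r}$, $l_{E_{i,i+2},r}$, $l_{2E_{i,i+1}+E_{i+1,i+2},r}$, $l_{E_{i,i+1}+E_{i,i+2},r}$, with all six intermediate identities and the final cancellation matching the paper's table.
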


\begin{proof} We only prove the first equation for $j=i+1$. The remaining part can be done in a similar way.
By Lemma \ref{Lemma3.2}, we have the following.

\begin{center}
\begin{tabular}{l}
$E_iE_{i+1}=l_{E_{i, i+2}, r}+l_{E_{i, i+1}+E_{i+1, i+2}, r}$,\\
$E_{i+1}E_{i}=l_{E_{i, i+1}+E_{i+1, i+2}, r} $,\\
$E_{i}E_{i}=(q+1)l_{2E_{i, i+1}, r}$,\\
$E_{i}l_{E_{i, i+2}, r}=ql_{E_{i, i+1}+E_{i, i+2}, r},$\\
$E_{i}l_{E_{i, i+1}+E_{i+1, i+2}, r}=l_{E_{i, i+1}+E_{i, i+2}, r}+(q+1)l_{2E_{i, i+1}+E_{i+1, i+2}, r}$,\\
$E_{i+1}E_{i}E_{i}=(q+1)l_{2E_{i, i+1}+E_{i+1, i+2}, r}$.
\end{tabular}
\end{center}
Therefore,
$$
E_i^2E_{i+1}-(q+1)E_iE_{i+1}E_i+qE_{i+1}E_i^2=E_{i}(E_{i}E_{i+1}-(q+1)E_{i+1}E_{i})+qE_{i+1}E_i^2=0.
$$
\end{proof}

Following Proposition \ref{serre} and \cite{RingelInv}, we can now modify the restriction $\theta|_{\bf{H}_q(Q)}$ as follows.
$$
\theta: {\bf H}_q(Q)\rightarrow S_q(n, r), $$ $$ S_i\mapsto E_i
$$
From now on, unless stated otherwise, by $\theta$ we mean the modified map $\theta|_{{\bf H}_q(Q)}$.
The following result is a modified version of Proposition 2.3 in \cite{GreenR} and we will give
a direct proof.  For any two modules $M, N$, recall that Hall multiplication of $M$ and $N$ is given
by $$M N =\sum_XF^X_{MN}X,$$
where $F^X_{MN}=|\{U\subseteq X|U\cong N, X/U\cong N\}|$ and where the sum is taken over all the isomorphism
classes of modules.

\begin{proposition}
Let $A$ be a strict upper triangular matrix with entries non-negative integers
and let $M$ be the module determined by $A$.  Then
$$
\theta(M)=\left\{\begin{tabular}{ll} $l_{A, r}$& if  $\sum_{i, j}a_{ij}\leq r$,  \\
$0$ & otherwise. \end{tabular}\right.
$$
\end{proposition}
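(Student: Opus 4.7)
My plan is to prove this by induction on the dimension $\dim M = \sum_{i,j}(j-i)a_{ij}$ of $M$. The base case $\dim M = 0$ is $M = 0$; the empty Hall product $1 \in \mathbf{H}_q(Q)$ is sent by $\theta$ to the unit $\sum_{D\in D_r}e_D = l_{0,r}$ of $S_q(n,r)$ by Theorem \ref{algebrastructure}, in agreement with the formula since $0 \leq r$.

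For the inductive step with $\dim M \geq 1$, let $h$ be the smallest vertex with $\sum_j a_{h,j} > 0$ and $j_0$ the smallest $j$ with $a_{h,j_0} > 0$. Define $M'$ to be the module with matrix $A' = A - E_{h,j_0} + E_{h+1,j_0}$ (or $A - E_{h,h+1}$ when $j_0 = h+1$), so $\dim M' = \dim M - 1$ and the inductive hypothesis gives $\theta([M']) = l_{A',r}$. In the twisted Ringel-Hall algebra the product $[S_h]\cdot [M']$ expands over modules $X$ fitting in $0 \to M' \to X \to S_h \to 0$: these are $M$ itself (gluing $S_h$ with the chosen $M(h+1,j_0)$ summand), the split extension $S_h \oplus M'$, and the alternative gluings $M(h,k) \oplus (M' \ominus M(h+1,k))$ for each $k$ with $A'_{h+1,k} > 0$. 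Applying $\theta$, the left side is $E_h \cdot l_{A',r}$; by Lemma \ref{Lemma3.2} it expands as
\[
E_h \cdot l_{A',r} = \sum_{p} v^{2\sum_{j>p}A'_{h,j}} \frac{v^{2(A'_{h,p}+1)}-1}{v^2-1} \, l_{A' + E_{h,p} - E_{h+1,p},\, r},
\]
summed over the same indexing set of $p$'s (together with the $p = h+1$ term corresponding to $S_h \oplus M'$, once the diagonal is readjusted). Since the indexing sets are in natural bijection, one matches terms, invokes the inductive hypothesis $\theta([X]) = l_{A_X,r}$ for each $X \neq M$, and solves for the coefficient of $l_{A,r}$ to conclude $\theta([M]) = l_{A,r}$.

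Handling the $X \neq M$ contributions requires a secondary induction, since the split extension $S_h \oplus M'$ has $|A_X| = |A| + 1$ and the other gluings produce matrices of the same dimension as $A$ but of different shape. These $X$'s are either proper degenerations of $M$ or carry strictly more summands, so inducting on $(\dim, |A|)$ together with the degeneration partial order within a fixed dimension dispatches them first. For the case $\sum a_{ij} > r$: then $\dim M \geq \sum a_{ij} > r$, the set $D_{r-\sum a_{ij}}$ is empty and hence $l_{A,r}$ is an empty sum; correspondingly, the recursion forces a product $E_h \cdot l_{A'',r}$ demanding a diagonal with negative total, which does not exist, producing $\theta([M]) = 0$.

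The main obstacle is the coefficient matching between the twisted Hall structure constants $c_X$ (carrying Euler-form $v$-twists) and the quantum rational expressions $v^{2\sum_{j>p}A'_{h,j}}(v^{2(A'_{h,p}+1)}-1)/(v^2-1)$ from Lemma \ref{Lemma3.2}. Proposition \ref{serre} already verifies this matching at the level of generators, so the general case reduces to careful but routine combinatorial bookkeeping for the linearly oriented $\mathbb{A}_{n-1}$ quiver.
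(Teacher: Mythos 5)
Your overall strategy—peel off a simple $S_h$, expand the Hall product $[S_h]\cdot[M']$, compare with Lemma \ref{Lemma3.2}, and solve for the coefficient of $l_{A,r}$—founders on the well-foundedness of the secondary induction, and the claim you use to justify it is false. Take $Q:1\to 2\to 3$ and $M=S_1\oplus M(2,4)$, so $h=1$, $j_0=2$ and $M'=M(2,4)$. The Hall product $[S_1][M(2,4)]$ contains, besides $[M]$ (which here is the split extension), the term $[M(1,4)]$. This $X=M(1,4)$ has \emph{fewer} indecomposable summands than $M$ and is \emph{more generic} than $M$ (indeed $M$ degenerates from it), so it is neither a proper degeneration of $M$ nor does it carry more summands. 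Hence the ordering ``$(\dim,|A|)$ plus degeneration order'' does not dispatch the unwanted terms before $M$: some extra terms (the split extension) have more summands while others (like $M(1,4)$) have fewer, so neither increasing nor decreasing $|A|$ gives a valid induction, and the degeneration order cuts both ways as well. Without a well-founded order in which every $X\neq M$ occurring in $[S_h][M']$ precedes $M$, the step ``invoke the inductive hypothesis for each $X\neq M$ and solve for $l_{A,r}$'' is circular. A second, lesser gap is the coefficient matching between the twisted Hall structure constants and the expressions in Lemma \ref{Lemma3.2}: Proposition \ref{serre} only checks the rank-two relations among the generators $E_i$, which does not by itself deliver the general identification you need; this is precisely the content that has to be computed, not ``routine bookkeeping.''

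The paper avoids both problems by a different choice of factorisation. For decomposable $M$ it writes $M=\frac{q-1}{q^{x_{st}}-1}\,M(st)\,M'$ where $M(st)$ is the left-most factor in a fixed PBW order; with that choice the Hall product $M(st)\,M'$ has a \emph{single} term (a scalar multiple of $M$ itself), so there are no extraneous $X$'s to control and the induction is simply on the number of indecomposable summands. For indecomposable $M=M(s,t)$ it uses the commutator identity $M(s,t)=M(s,t-1)M(t-1,t)-M(t-1,t)M(s,t-1)$, in which the unwanted term cancels exactly, together with Lemma \ref{Lemma3.2} and its dual. If you want to salvage your approach, you would need to replace the peeling of a simple by a factorisation of this monomial type (or otherwise prove directly that every other extension term has already been treated), rather than relying on the degeneration order.
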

\begin{proof}
Note that $\mathcal{B}=\{\Pi_{i, j}M(ij)^{x_{ij}}|x_{ij}\in \mathbb{Z}_{\geq 0} \}$
is a PBW-basis of ${\bf H}_q(Q)$, where
the product is ordered as follows:  $M(ij)$ is on the left hand side of $M(st)$ if
either $i=s$ and $j>t$, or $i>s$. Let $M\in \mathcal{B}$ be
the module determined by $A$. Supose that $M(st)$ is the left most term with $x_{st}>0$.
We can write $M= M(st)\oplus M'$.

First consider the case $M=M(st)$, that is, $M$ is indecomposable. We may suppose that $t-s>1$.
Then $M=M(s, t-1)M(t-1, t)-M(t-1, t)M(s, t-1)$. By induction on the lenght of $M$, Lemma \ref{Lemma3.2}
and a dual version of it, we have
$$\theta(M(s, t-1)M(t-1, t))=l_{E_{s, t}, r}+l_{E_{s,t-1}+E_{t-1, t} , r},
$$
$$\theta(M(t-1, t)M(s, t-1))=l_{E_{s,t-1}+E_{t-1, t} , r}.
$$
Therefore $\theta(M)=\theta(M(s, t-1)M(t-1, t))-\theta(M(t-1, t)M(s, t-1))= l_{E_{st}, r}$.

Now consider the case that $M$ is decomposable. We use induction on the number of indecomposable
direct summands of $M$. By
the assumption  we have $M=\frac{q-1}{q^{x_{st}}-1}M(st)M'$. Therefore
\begin{center}
\begin{tabular}{ll}
$\theta(M)$ & $=\frac{q-1}{q^{x_{st}}-1} \theta(M(st))\theta(M')$\\
& $= \frac{q-1}{q^{x_{st}}-1}\sum_{D\in D_{r-1}}e_{E_{st}+D}e_{A-E_{st}+D'}$,
\end{tabular}
\end{center}
where $D'$ is the diagonal matrix with non-negative integers as entries such that
$\mathrm{co}(E_{st}+D)=\mathrm{ro}(A-E_{st}+D')$.
Suppose that $e_B$ appears in
the multiplication of $e_{E_{st}+D}e_{A-E_{st}+D'}$ and $(f, h)\in \mathcal{O}_B$.
Note that in the minimal projective resolution $Q\rightarrow P$ of $M'$, the projective module
$P_t$ is not a direct summand of $P$. Therefore by the definition of the multiplication
$e_{E_{st}+D}e_{A-E_{st}+D'}$, we have
$f/h\cong M$, that is $B=A$,  and the coefficient
of $e_B$ is about the possibilities of choosing a submodule, which is
isomorphic to $P_s$,  of $P_s^{x_{st}}$. Hence the coefficient is $ \frac{{q^{x_{st}}-1}}{q-1}$
and so $\theta(M)=\sum_{D}e_{A+D}=l_{A, r}$. This finishes the proof.
\end{proof}

\begin{remark}
Proposition 2.3 in \cite{GreenR} has a minor inaccuracy. Indeed,
there is a coefficient missing in front of the image $\theta(M)$.
For example, let $n=3$, $r=2$ and let $M$ be the module determined by the
elementary matrix $E_{13}$. Then
$\theta(M)=vy_{X, r}$, but not $y_{X, r}$, as stated in Proposition 2.3 in \cite{GreenR},
here $\theta$ is the original map
from the quantised enveloping algebra to the Schur algebra $S_v(3, 2)$ and
$y_{X, r}=\sum_{D\in D_1}[X+D]$.
\end{remark}

\subsection{A homomorphism of algebras $\Gamma: \mathbb{Q}\mathcal{M}\rightarrow S_0^+$}
For a given module $M$, denote by $|M|_{\mbox{dir}}$ the number of indecomposable direct
summands of $M$. Let
$\Gamma: \mathbb{Q}\mathcal{M} \rightarrow S_0^+$ be the map given by
\[\Gamma(M)= \left\{\begin{tabular}{ll}
$\theta(M)$ & if  $|M|_{\mbox{dir}}\leq r$, \\ \\
 $0$ & {otherwise}.
\end{tabular}\right.\]

Let $X$ be a module and let $D=\mathrm{diag}(d_1, \cdots, d_n)$ be a diagonal matrix.  Write $X=\oplus_{i, j}M(ij)^{x_{ij}}$.
By $e_{X+D}$ we mean
the basis element in $S_q(n, r)$ corresponding to
the matrix with its entry at $(i, j)$ given by $x_{ij}+\delta_{ij}d_i$ for $i\leq j$ and $0$
elsewhere, where $\delta_{ij}$ are the Kronecker data. Let $\sigma=(\sigma_i)_i: Q\rightarrow P$ be an injection
of $Q$ into $P$, where $P$ and $Q$ are projective modules. Then $(P, Q)$
gives a pair of flags $(f_1, f_2)$ with $f_2$ a subflag of $f_1$. More precisely, the i-th step of $f_1$
is given by $\im P_{\alpha_{n-2}}\cdots  P_{\alpha_i}$ for $i\leq n-2$ and the $(n-1)$-th step is given by the
vector space associated to vetex $n-1$ of $P$, where $P_{\alpha_j}$ is the linear map on
the arrow $\alpha_j$ from $j$ to $j+1$ for the module $P$.
The i-th step of $f_2$ is given by
 $\im P_{\alpha_{n-2}}\cdots P_{\alpha_i}\sigma_i$ for $i\leq n-2$ and the $(n-1)$-th step is given
 by $\im \sigma_{n-1}$.
We have the following result.

\begin{theorem}\label{quotient}
The map $\Gamma$ is a morphism of algebras.
\end{theorem}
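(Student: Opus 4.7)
The plan is to verify $\Gamma(M * N) = \Gamma(M) \circ \Gamma(N)$ by direct expansion on both sides. Let $A$, $A'$, $A''$ denote the strict upper triangular matrices determined by $M$, $N$, and $X = M * N$. By the preceding proposition, $\Gamma(M) = l_{A,r} = \sum_{D_1} e_{A + D_1}$ when $|M|_{\mbox{dir}} \le r$ and $0$ otherwise, and analogously for $\Gamma(N)$ and $\Gamma(X)$. Expanding gives
$$\Gamma(M) \circ \Gamma(N) = \sum_{D_1,D_2} e_{A+D_1} \circ e_{A'+D_2},$$
and the goal is to match this with $l_{A'', r}$.

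Each non-zero summand equals $e_B$ for $B$ the matrix of the dense open orbit in $\mathcal{E}(A+D_1, A'+D_2)$. The crucial input is Proposition \ref{genericprop}: the module determined by $B$ is the generic extension of the module of $A + D_1$ by that of $A' + D_2$. Since the module $\bigoplus_{i<j} M(ij)^{a_{ij}}$ depends only on the strict upper triangular part, this generic extension is $M * N = X$, and hence $B$ has strict upper triangular part exactly $A''$, so $B = A'' + D$ for some diagonal matrix $D$.

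Next I would set up a bijection between valid pairs $(D_1, D_2)$ and valid diagonals $D$. Comparing row and column sums of $B = A'' + D$ with those of $A + D_1$ and $A' + D_2$ yields
$$D_1 = D + \mathrm{ro}(A'') - \mathrm{ro}(A), \qquad D_2 = D + \mathrm{co}(A'') - \mathrm{co}(A');$$
the compatibility $\mathrm{co}(A+D_1) = \mathrm{ro}(A'+D_2)$ is automatic from $\dim X = \dim M + \dim N$, and a trace calculation shows that the trace constraints on $D_1$, $D_2$, $D$ are equivalent. The componentwise inequalities $\mathrm{ro}(A'') \ge \mathrm{ro}(A)$ and $\mathrm{co}(A'') \ge \mathrm{co}(A')$ then ensure that $D \ge 0$ forces $D_1, D_2 \ge 0$: the first holds because $M$ is a quotient of $X$, so the minimal projective cover of $M$ is a direct summand of that of $X$; the second holds because $N \hookrightarrow X$ and $kQ$ is hereditary, so pulling back the projective cover of $X$ along $N \hookrightarrow X$ produces a projective cover of $N$, exhibiting the syzygy of $N$ as a summand of that of $X$. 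In the other direction, $D \ge 0$ is automatic, since any non-zero product $e_{A+D_1} \circ e_{A'+D_2}$ already forces $B = A'' + D \in \Theta_r^u$.

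Summing over the bijection gives $\Gamma(M) \circ \Gamma(N) = \sum_D e_{A'' + D} = l_{A'', r} = \Gamma(M * N)$ in the non-trivial range. The same inequalities also imply $|X|_{\mbox{dir}} \ge \max(|M|_{\mbox{dir}}, |N|_{\mbox{dir}})$, so the edge cases where some factor has $|\cdot|_{\mbox{dir}} > r$ produce $0$ on both sides uniformly. The main obstacle I expect is the inequality $\mathrm{co}(A'') \ge \mathrm{co}(A')$, which is less immediate than the quotient argument on rows and requires the hereditary pull-back construction sketched above.
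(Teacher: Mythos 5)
Your proposal is correct in substance and follows the same overall strategy as the paper --- both arguments reduce the identity $\Gamma(M\ast N)=\Gamma(M)\circ\Gamma(N)$ to a bijection between pairs $(D_1,D_2)$ of diagonal matrices with non-zero product and single diagonals $D$ indexing $l_{A'',r}$, with Proposition \ref{genericprop} supplying the identification of the strict upper triangular part of each product with $A''$. Where you differ is in how the bijection is realised. The paper, given $D$, builds $(D',D'')$ by writing down an explicit commutative diagram of projective resolutions for the extension $0\to N\to X\to M\to 0$ augmented by the contractible piece $\oplus_i P_i^{d_i}$, and reads off $D'$, $D''$ as the maximal contractible pieces of the induced resolutions of $M$ and $N$. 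You instead derive the closed formulas $D_1=D+\mathrm{ro}(A'')-\mathrm{ro}(A)$, $D_2=D+\mathrm{co}(A'')-\mathrm{co}(A')$ from row/column-sum bookkeeping, and justify non-negativity by the homological inequalities $\mathrm{ro}(A'')\geq\mathrm{ro}(A)$ (projective cover of a quotient) and $\mathrm{co}(A'')\geq\mathrm{co}(A')$ (syzygy of a submodule, using heredity); these inequalities are correct and also give you the edge cases for free via $|X|_{\mathrm{dir}}\geq\max(|M|_{\mathrm{dir}},|N|_{\mathrm{dir}})$. The one step you leave implicit is that for each admissible $D$ the product $e_{A+D_1}\circ e_{A'+D_2}$ is actually \emph{non-zero}, i.e.\ that $\mathcal{E}(A+D_1,A'+D_2)\neq\emptyset$: matching row and column sums alone does not formally guarantee this, and you still need to exhibit a chain of projectives $P\supseteq Q'\supseteq Q''$ with $P/Q'\cong M$ and $Q'/Q''\cong N$ of the prescribed types (take $P=P(M)\oplus\bigoplus_iP_i^{(D_1)_i}$, $Q'=\Omega M\oplus\bigoplus_iP_i^{(D_1)_i}\cong P(N)\oplus\bigoplus_iP_i^{(D_2)_i}$, $Q''=\Omega N\oplus\bigoplus_iP_i^{(D_2)_i}$). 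This is exactly the content of the paper's diagram, so you should add that short construction; with it, your argument is complete and arguably cleaner, since the formulas make the injectivity of $(D_1,D_2)\mapsto D$ immediate.
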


\begin{proof}
The unit in $\mathcal{M}$ is the zero module. By the definition of $\Gamma$,
it is clear that $\Gamma(0)= \sum_{D\in D_r}e_D$, the unit of $(\Theta_r^u, \circ)$.

We now need only to show
$$ \Gamma(M\ast N)=\Gamma(M)\circ\Gamma(N). \;\;\; (1)$$

Let $X$ be a generic point $\mathcal{E}(M, N)$. Denote the number of indecomposable
direct summands of $X$, $M$, $N$ by $a$, $b$ and $c$,  respectively. By the definition of
$\Gamma$, we can write
$$\Gamma(X)=\sum_{D\in D_{r-a}}e_{X+D}, $$
$$ \Gamma(M)=\sum_{D'\in D_{r-b}}e_{M+D'},$$
$$ \Gamma(N)=\sum_{D''\in D_{r-c}}e_{N+D''}.$$

We first consider the case where $a>r$. Clearly, in this case $\Gamma(M\ast N)=0$ and we
claim that $e_{M+D'}e_{N+D''}=0$ for any $D'\in D_{r-b}$ and $D''\in D_{r-c}$. In fact suppose that
$e_{L+D'''}$ appears in the multiplication, where $L$ is a module and $D'''$ is a diagonal matrix.
Then $|L|_{\mathrm{dir}}\leq r$, and  $L$ is a degeneration of $X$. Since $Q$ is linearly oriented,
$|L|_{\mbox{dir}}\geq a$.
This is  a contradiction. Therefore $e_{M+D'}e_{N+D''}=0$, and so
$$(\sum_{D'\in D_{r-b}}e_{M+D'})\circ (\sum_{D''\in D_{r-c}}e_{N+D''})=0.$$
This proves the equation (1) for the case $a>r$.

Now suppose that $a\leq r$. Note that if $(D', D'')\not=(C', C'')$, where
$D', C'\in D_{r-b}$ and $D'', C''\in D_{r-c}$, then $e_{M+D'}\circ e_{N+D''}\not=  e_{M+C'}\circ e_{N+C''}$.
By Proposition \ref{genericprop}, we know that if $e_{M+D'}\circ e_{N+D''}\not= 0$, then
$e_{M+D'}\circ e_{N+D''}= e_{X+D}$ for some $D\in D_{r-a}$.

On the other hand, we can show that for any
$e_{X+D}$ appearing in the image of $X$ under $\Gamma$, there exist $D'\in D_{r-b}$ and $D''\in D_{r-c}$
such that $e_{M+D'}\circ e_{N+D''}=e_{X+D}$.  Suppose that
$$\xymatrix{0\ar[r]& Q\ar[r]^\sigma&P\ar[r]^\tau& X\ar[r]&0 }
 $$
is the minimal projective resolution of $X$. Write $D=\mathrm{diag}(d_1, \cdots, d_n)$ and
let $Y$ be the projective module $\oplus_iP_i^{d_i}$. Then the pair of flags in $\mathcal{F}\times \mathcal{F}$,
determined by $(P\oplus Y, Q\oplus Y)$, is in the orbit $\mathcal{O}_{X+D}$.
We have the following diagram where each square commutes and all rows and columns
are short exact sequences,

$$
\xymatrix{
\ker \lambda\ar[d]\ar[r]& Q\oplus Y\ar[d]^{\left(\begin{smallmatrix}\sigma&0\\0& I \end{smallmatrix}\right)}\ar[r]& 0\ar[d]\\
K \ar[r]\ar[d]^\lambda & P\oplus Y \;\; \ar[r]^{\left(\begin{smallmatrix}p\tau&0\end{smallmatrix}\right)}
\ar[d]^{\left(\begin{smallmatrix}\tau&0\end{smallmatrix}\right)}&M,\ar[d]\\
N\ar[r]_i&X\ar[r]_p&M
}
$$
where $I$ is the identity map on $Y$,  $K=\ker\left(\begin{smallmatrix}p\tau&0\end{smallmatrix}\right)$ and
$\lambda= (\tau  0)|_{K}$. Let $\xymatrix{Z'\ar[r]^{I}&Z'}$ be the maximal
contractible piece of the projective resolution
$$
\xymatrix{0\ar[r]&K \ar[r] & P\oplus Y \;\;\ar[r]^{\left(\begin{smallmatrix}p\tau&0\end{smallmatrix}\right)}
&M\ar[r]&0}
$$
of $M$, and let $\xymatrix{Z''\ar[r]^{I}&Z''}$ be
the maximal contractible piece of the projective resolution
$$
\xymatrix{0\ar[r]& \ker \lambda \ar[r]&  K\ar[r] &N\ar[r]&0}
$$
of $N$.
Write $$Z'=\oplus_iP_i^{d_i'} \mbox{ and } Z''=\oplus_iP_i^{d_i''}, $$
and let
$$D'=\mathrm{diag}(d_1', \cdots, d_n') \mbox{ and } D''=\mathrm{diag}(d_1'', \cdots, d_n'').$$
Then $e_{M+D'}\circ e_{N+D''}=e_{X+D}$. Therefore,
$$\sum_{D'\in D_{r-b}}e_{M+D'}\circ \sum_{D''\in D_{r-c}}e_{N+D''}= \sum_{D\in D_{r-a}}e_{X+D}.$$
This proves the equations (1), and so the proof is done.
\end{proof}

The following result is a direct consequence of Theorem \ref{quotient}.

\begin{corollary}
$\ker \Gamma=\mathbb{Q}$-$\mathrm{Span}\{M| |M|_{\mathrm{dir}}> r\}$.
\end{corollary}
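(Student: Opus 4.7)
The plan is to deduce both inclusions from the explicit formula $\theta(M)=l_{A,r}$ of the preceding proposition, without needing the algebra-homomorphism property of $\Gamma$. The inclusion $\mathbb{Q}$-$\mathrm{Span}\{M\mid |M|_{\mathrm{dir}}>r\}\subseteq\ker\Gamma$ is immediate from the definition of $\Gamma$, since any such $M$ is sent to $0$.

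For the reverse inclusion, I would write any $x\in\mathbb{Q}\mathcal{M}$ as $x=x_{\le r}+x_{>r}$, grouping summands according to whether $|M|_{\mathrm{dir}}\le r$ or $|M|_{\mathrm{dir}}>r$. Since $\Gamma(x_{>r})=0$ automatically, $x\in\ker\Gamma$ forces $\Gamma(x_{\le r})=0$, and it therefore suffices to show that $\Gamma$ is injective on the span of iso-classes with at most $r$ indecomposable summands. On such classes, the preceding proposition gives $\Gamma(M)=l_{A_M,r}$, where $A_M$ is the strict upper triangular matrix determined by the indecomposable decomposition $M=\bigoplus_{i<j}M(ij)^{(A_M)_{ij}}$.

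The core step is then the linear independence of the family $\{l_{A,r}\}$ as $A$ ranges over strict upper triangular matrices with $\sum a_{ij}\le r$. Since $l_{A,r}=\sum_{D\in D_{r-\sum a_{ij}}}e_{A+D}$ and distinct choices of $A$ produce disjoint supports $\{A+D\}\subseteq\Theta_r^u$ (the off-diagonal part of $A+D$ recovers $A$), linear independence is immediate from the fact that $\{e_B\mid B\in\Theta_r^u\}$ is a $\mathbb{Q}$-basis of $S_0^+$. Combining this with the reduction above shows that the coefficients $c_M$ of the $M$'s with $|M|_{\mathrm{dir}}\le r$ in any kernel element must vanish, yielding the reverse inclusion.

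I do not anticipate any real obstacle; the only point to verify carefully is the bijection between iso-classes $M$ with $|M|_{\mathrm{dir}}=s$ and strict upper triangular matrices $A\in\Theta_s^u$, which follows from the decomposition of $M$ into indecomposables $M(ij)$ and from the observation that each $M(ij)$ contributes exactly $1$ to both $|M|_{\mathrm{dir}}$ and $\sum a_{ij}$.
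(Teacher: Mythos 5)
Your argument is correct. The paper offers no proof here beyond declaring the corollary a direct consequence of Theorem \ref{quotient}, so your write-up is essentially the intended argument with the details filled in: the containment $\mathbb{Q}\text{-}\mathrm{Span}\{M\mid |M|_{\mathrm{dir}}>r\}\subseteq\ker\Gamma$ is definitional, and the reverse containment reduces to the linear independence of the elements $l_{A,r}$, which your disjoint-support observation (the off-diagonal part of $A+D$ recovers $A$) establishes cleanly against the basis $\{e_B\mid B\in\Theta_r^u\}$ of $S_0^+$. Note that what you actually use is the Proposition computing $\theta(M)=l_{A,r}$ rather than the homomorphism property of Theorem \ref{quotient}; that is the right dependency, and the identification $|M|_{\mathrm{dir}}=\sum_{i<j}a_{ij}$ via the Krull--Schmidt decomposition $M=\bigoplus_{i<j}M(ij)^{a_{ij}}$, which you flag at the end, is indeed the only bookkeeping point to check.
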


\subsection{A geometric realisation of 0-Schur algebras}

\begin{theorem}\label{0schuralgebras}
$S_0^+(n, r)\cong \mathbb{Q}S_0^{++}$ as algebras.
\end{theorem}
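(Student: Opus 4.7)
The plan is to compare $\Gamma$ with the specialisation at $q=0$ of $\theta$, which I denote $\bar\theta:{\bf H}_0(Q)\twoheadrightarrow S_0^+(n,r)$. By Theorem~\ref{hallmonoid}(2) there is a $\mathbb{Q}$-algebra isomorphism $\mathbb{Q}\mathcal{M}\cong{\bf H}_0(Q)$, under which $\bar\theta$ becomes a surjective algebra homomorphism $\bar\theta:\mathbb{Q}\mathcal{M}\twoheadrightarrow S_0^+(n,r)$. I claim $\ker\bar\theta=\ker\Gamma$, and then the first isomorphism theorem gives
$$S_0^+(n,r)\cong \mathbb{Q}\mathcal{M}/\ker\bar\theta=\mathbb{Q}\mathcal{M}/\ker\Gamma\cong S_0^{++},$$
with $l_{A,r}|_{q=0}\in S_0^+(n,r)$ corresponding to $l_{A,r}\in S_0^{++}$.

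To establish the kernel equality I would invoke the Proposition preceding the Remark: it asserts $\theta(M)=l_{A,r}$ when $|M|_{\mathrm{dir}}=\sum a_{ij}\le r$, and $\theta(M)=0$ otherwise, and both identities persist after specialisation at $q=0$. Hence the span of $\{M:|M|_{\mathrm{dir}}>r\}$ is contained in $\ker\bar\theta$. For the reverse inclusion, the specialised elements $\{l_{A,r}|_{q=0}\}$ indexed by strict upper triangular $A$ with $\sum a_{ij}\le r$ have pairwise disjoint supports in the natural basis $\{e_B\}_{B\in\Theta_r}$ of $S_0(n,r)$, so they are linearly independent. Splitting an arbitrary element of $\ker\bar\theta$ into two parts according to whether $|M|_{\mathrm{dir}}\le r$ or not, the coefficients on the first part must all vanish, yielding $\ker\bar\theta=\mathbb{Q}$-$\mathrm{Span}\{M:|M|_{\mathrm{dir}}>r\}$. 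By the preceding Corollary this is exactly $\ker\Gamma$.

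The one delicate point is to check that $\bar\theta$ really respects the generic extension multiplication $\ast$ on $\mathbb{Q}\mathcal{M}$. This follows because $\theta:{\bf H}_q(Q)\to S_q(n,r)$ is a $\mathbb{Q}[q]$-algebra morphism, so its specialisation at $q=0$ is an algebra morphism out of ${\bf H}_0(Q)$, and Theorem~\ref{hallmonoid}(2) identifies ${\bf H}_0(Q)$ with $(\mathbb{Q}\mathcal{M},\ast)$. Granting this, the kernel comparison above suffices; the substantive work has already been carried out in Theorem~\ref{quotient} and in the Proposition preceding the Remark, and no further computation is required.
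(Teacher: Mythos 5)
Your argument is essentially the paper's own proof: both compare $\Gamma$ with the specialisation $\theta_0$ of $\theta$ at $q=0$, identify $\ker\theta_0=\mathbb{Q}\text{-}\mathrm{Span}\{M\mid |M|_{\mathrm{dir}}>r\}=\ker\Gamma$ under the isomorphism $\mathbb{Q}\mathcal{M}\cong{\bf H}_0(Q)$ of Theorem~\ref{hallmonoid}, and conclude by the first isomorphism theorem (the paper phrases this as a commutative diagram). Your disjoint-support argument for the linear independence of the $l_{A,r}|_{q=0}$ is a correct filling-in of a step the paper merely asserts.
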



\begin{proof}

Denote by $\theta_0$ the specialisation of $\theta$ to $0$, that is,
$\theta_0: {\bf H}_0(Q)\rightarrow S_0(n, r)$. We have
$\ker \theta_0=\mathbb{Q}$-$\mathrm{Span}\{ M| |M|_{\mathrm{dir}}> r\}=\ker \Gamma$, where $\Gamma$ is as in
Theorem \ref{quotient}.  Now the proof
follows from the following commutative diagram.
$$\xymatrix{
\ker \theta_0\ar[r]\ar[d]^\cong & {\bf H}_0(Q)\ar[r]^{\theta_0}\ar[d]^\cong & S_0^+(n, r)\ar@{.>}[d]^\cong\\
\ker \Gamma\ar[r] & \mathcal{M}\ar[r]^\Gamma & S_0^+.
}$$
\end{proof}

As a direct consequence of Theorem \ref{0schuralgebras}, we obtain a multiplicative basis of the positive  part
of $0$-Schur algebras, in the sense that the multiplication of any two basis elements is either a basis element
or zero.

\begin{corollary}
The elements in  $\{l_{A, r}|$ $A$ is an strictly upper triangular matrix in $\bigcup_{s\leq r}\Theta_s^u\}$ form a multiplicative basis of
$S^+_0(n, r)$.
\end{corollary}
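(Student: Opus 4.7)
The plan is to read this off from the isomorphism established in Theorem \ref{0schuralgebras} together with the monoid structure on $\mathcal{M}$. Under the isomorphism $S_0^+(n,r)\cong \mathbb{Q}S_0^{++}$, every element of $S_0^{++}$ is a $\mathbb{Q}$-linear combination of elements $l_{A,r}$ with $A$ a strictly upper triangular matrix in $\Theta_s^u$ for some $s\leq r$, since these are exactly the images under $\Gamma$ of the generators of $S_0^{++}$, and by Theorem \ref{quotient} the $\mathbb{Q}$-span of such $l_{A,r}$ is already closed under $\circ$.

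First, I would establish linear independence. The elements $\{l_{A,r}\}$, indexed by strictly upper triangular $A\in\bigcup_{s\leq r}\Theta_s^u$, are images under $\Gamma$ of the isomorphism classes $M$ of $kQ$-modules with $|M|_{\mathrm{dir}}\leq r$. Since these classes $M$ form a $\mathbb{Q}$-basis of $\mathcal{M}$, and by the corollary to Theorem \ref{quotient} the kernel of $\Gamma$ is precisely the span of classes with $|M|_{\mathrm{dir}}>r$, the restriction of $\Gamma$ to the span of classes with $|M|_{\mathrm{dir}}\leq r$ is injective. Hence the corresponding $l_{A,r}$ are linearly independent. Since they also span (as the image of a spanning set of $\mathcal{M}$ via $\Gamma$, followed by the isomorphism of Theorem \ref{0schuralgebras}), they form a basis of $S_0^+(n,r)$.

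Next, the multiplicativity. Given two basis elements $l_{A,r}$ and $l_{B,r}$ with determined modules $M$ and $N$, Theorem \ref{quotient} gives
\[
l_{A,r}\circ l_{B,r}=\Gamma(M)\circ\Gamma(N)=\Gamma(M\ast N),
\]
where $M\ast N$ is the generic extension in the Reineke monoid $\mathcal{M}$ (Theorem \ref{hallmonoid}). Since $M\ast N$ is itself an isomorphism class of a $kQ$-module, say determined by a strictly upper triangular matrix $C$, its image under $\Gamma$ is either $l_{C,r}$ (when $|M\ast N|_{\mathrm{dir}}\leq r$) or $0$ (otherwise, by the definition of $\Gamma$). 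In either case the product is a basis element or zero, which is exactly the multiplicativity required.

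The only mildly subtle point, and the one I would flag, is making sure the basis really sits inside $S_0^+(n,r)$ and not merely inside $S_0^+$: this is why one must pass through the isomorphism of Theorem \ref{0schuralgebras} rather than argue inside $S_0^+$ alone. Everything else is bookkeeping once the correspondence $l_{A,r}\leftrightarrow M$ and the compatibility $\circ\leftrightarrow\ast$ are in hand.
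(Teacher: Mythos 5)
Your argument is correct and follows exactly the route the paper intends: the corollary is stated there as a direct consequence of Theorem \ref{0schuralgebras}, with no further details given, and your elaboration via the description of $\ker\Gamma$, the identity $\Gamma(M)\circ\Gamma(N)=\Gamma(M\ast N)$, and the transfer through the isomorphism $S_0^+(n,r)\cong\mathbb{Q}S_0^{++}$ supplies precisely the omitted bookkeeping. Nothing is missing.
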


Under the map $\Gamma$,  this multiplicative basis
$\{l_{A, r}|$ for any $A\in \bigcup_{s\leq r}\Theta_s^u\}$ is the image of the
multiplicative basis for ${\bf H}_0(Q)$ studied in \cite{Reineke}. By
Theorem 7.2 in \cite{Reineke}, the multiplicative
basis for ${\bf H}_0(Q)$ is the specialisation of Lusztig's canonical
basis for a two-parameter quantization of the universal enveloping algebra of $\mathrm{gl}_n$ given
in \cite{takeuchi}. Thus we can consider the basis $\{l_{A, r}|$ $A$ is a strictly upper triangular matrix in
$\bigcup_{s\leq r}\Theta_s^u\}$
as a subset of a specilization of the canonical
basis.


\vspace{3mm}

{\bf Acknowledgement}: The author would like to thank Steffen K\"{o}nig for helpful discussions.

\bigskip

{\parindent=0cm
Mathematisches Insitut, \\
Universit\"{a}t zu K\"{o}ln,\\
Weyertal 86-90, 50931 K\"{o}ln,\\
Germany.\\
email:xsu@math.uni-koeln.de
}

\begin{thebibliography}{88}

\bibitem{BLM} Beilinson, A. A., Lusztig, G. and MacPherson, R.,
{\it A geometric setting for the quantum deformation of ${\rm GL}\sb n$},
Duke Math. J. 61 (1990), no. 2, 655--677.

\bibitem{DJ} Dipper, R.,  James, G., {\it The $q$-Schur algebra},
Proc. London Math. Soc. (3) 59 (1989), no. 1, 23--50

\bibitem{DipperJames} Dipper, R. and James, G.,
{\it $q$-tensor space and $q$-Weyl modules},
Trans. Amer. Math. Soc. 327 (1991), no. 1, 251--282.

\bibitem{Donkin} Donkin, S., {\it The $q$-Schur algebra}, London Mathematical Society Lecture Note Series, 253. Cambridge University Press, Cambridge, 1998. x+179 pp. ISBN: 0-521-64558-1.

\bibitem{Duj} Du, J.,
{\it A note on quantised Weyl reciprocity at roots of unity},
Algebra Colloq. 2 (1995), no. 4, 363--372.


\bibitem{GreenR} Green, R. M., {\it $q$-Schur algebras as quotients of quantised enveloping algebras},
J. Algebra 185 (1996), no. 3, 660--687.

\bibitem{GreenJ}  Green, J. A., {\it Polynomial representations of ${\rm GL}\sb{n}$}.
Algebra, Carbondale 1980 (Proc. Conf., Southern Illinois Univ., Carbondale, Ill., 1980), pp. 124--140, Lecture Notes in Math., 848, Springer, Berlin, 1981.

\bibitem{JS} Jensen, B. T. and Su, X., {\it Singularities in derived categories}, Manuscripta math. 117, 475-490(2005).


\bibitem{KrobTh} Krob, D. and Thibon, J. Y., {\it Noncommutative symmetric functions IV},
Quantum linear groups and Hecke algebras at $q=0$. J. Algebraic Combin. 6 (1997), no. 4, 339--376.


\bibitem{Reineke} Reineke, M.,
{\it Generic extensions and multiplicative bases of quantum groups at $q=0$},
Represent. Theory 5 (2001), 147--163.


\bibitem{RingelBanach}  Ringel, C. M., {\it Hall algebras},  Topics in algebra, Part 1 (Warsaw, 1988), 433--447, Banach Center Publ., 26, Part 1, PWN, Warsaw, 1990.

\bibitem{RingelInv} Ringel, C. M., {\it Hall algebras and quantum groups}, Invent. Math. 101 (1990), no. 3, 583--591.

\bibitem{Solomon} Solomon, L.,
{\it A decomposition of the group algebra of a finite Coxeter group},
J. Algebra 9 (1968), 220--239.

\bibitem{takeuchi} Takeuchi, M., {\it A two parameter quantization of $\mathrm{Gl}_n$}, Proc. Japan, Acad. 66 (1990),
112-114.

\bibitem{Zwara} Zwara, G., {\it Degenerations of finite-dimensional modules are given by extensions},
Compositio Math. 121 (2000), no. 2, 205--218.

\end{thebibliography}
\end{document}